\newtheorem{theorem}{Theorem}[section]
\theoremstyle{definition}
\newtheorem{definition}[theorem]{Definition}
\newtheorem{example}[theorem]{Example}
\newtheorem{corollary}[theorem]{Corollary}
\theoremstyle{remark}
\numberwithin{equation}{section}
\begin{document}

\title{
$wsq$-primary hyperideals in a Krasner $(m,n)$-hyperring
  }

\author{M. Anbarloei}
\address{Department of Mathematics, Faculty of Sciences,
Imam Khomeini International University, Qazvin, Iran.
}

\email{m.anbarloei@sci.ikiu.ac.ir }


\subjclass[2010]{ 20N20, 16Y99. 
}


\keywords{  $n$-ary $q$-primary hyperideal, $(k,n)$-absorbing $q$-primary hyperideal, $n$-ary $sq$-primary hyperideal, $n$-ary $wsq$-primary hyperideal.}

\begin{abstract}
In this paper, we present a new class of hyperideals: called weakly strongly quasi-primary (briefly, $wsq$-primary) hyperideal. For this purpose we first need to introduce
the notions of quasi-primary and strongly quasi-primary hyperideals. After the definition and investigation of them, we introduce and study weakly strongly quasi-primary hyperideals. A proper hyperideal $P$ of a Krasner $(m,n)$-hyperring $R$ is said to be  $n$-ary weakly strongly quasi-primary if  $0 \neq g(r_1^n) \in P$ for each $r_1^n \in R$ implies that $g(r_i^{(2)},1^{(n-2)}) \in P$ or $ g(r_1^{i-1},1,r_{i+1}^n) \in {\bf r^{(m,n)}}(P)$ for some $1 \leq i \leq n$.  Several properties and characterizations concerning the concept  are presented. The stability of this new concept with respect to various hyperring-theoretic constructions is studied. 

\end{abstract}
\maketitle
\section{Introduction}
 The prime and primary ideals are the remarkably important  structures. A proper ideal of a commutative ring $R$ is called quasi-primary if its radical is prime. This concept was introduced by Fuchs in \cite{Fuchs}. Some operations such as saturation and  idealization on quasi-primary ideals were presented in \cite{Samei}. The notion of 2-absorbing quasi-primary ideals as a generalization of quasi-primary ideals was given in \cite{Tekir}. An intermediate class of primary ideals and quasi primary ideals which is called strongly quasi primary ideals was introduced and investigated by Koc et al. \cite{Koc}. Moreover, they have constructed a subgraph of ideal based on zero divisor graph characterizing strongly quasi primary ideals and   have found when two graphs are equal. In \cite{Ugurlu}, Ugurlu et al. defined and studied the concept of  weakly strongly quasi primary ideals. A proper ideal $I$ of a commutative ring $R$ is called weakly strongly quasi primary if $0\neq xy \in I$ for some $x,y \in R$ implies that $x^2 \in I$ or $y \in \sqrt{I}$. 


Krasner Hyperrings are an  weighty class of algebraic hyperstructures.
In the structure,  the addition is a hyperoperation, while the multiplication is an ordinary binary operation. A generalization of the structure, which is a subclass of $(m,n)$-hyperrings, was defined  in \cite{d1}. It is called Krasner $(m,n)$-hyperring. \cite{d1} $(R, f, g)$, or simply $R$, is called a Krasner $(m, n)$-hyperring if:
(1) $(R, f$) is a canonical $m$-ary hypergroup;
(2) $(R, g)$ is a $n$-ary semigroup;
(3) The $n$-ary operation $g$ is distributive with respect to the $m$-ary hyperoperation $f$ , i.e., for every $a^{i-1}_1 , a^n_{ i+1}, x^m_ 1 \in R$, and $1 \leq i \leq n$,
\[g\bigg(a^{i-1}_1, f(x^m _1 ), a^n _{i+1}\bigg) = f\bigg(g(a^{i-1}_1, x_1, a^n_{ i+1}),..., g(a^{i-1}_1, x_m, a^n_{ i+1})\bigg);\]
(4) $0$ is a zero element (absorbing element) of the $n$-ary operation $g$, i.e., for every $x^n_ 2 \in R$ , 
$g(0, a^n _2) = g(a_2, 0, x^n _3) = ... = g(a^n_ 2, 0) = 0$. 
A non-empty subset $S$ of $R$ is called a subhyperring of $R$ if $(S, f, g)$ is a Krasner $(m, n)$-hyperring. 
The non-empty subset $I$ of $(R,f,g)$  is a hyperideal  if $(I, f)$ is an $m$-ary subhypergroup of $(R, f)$ and $g(a^{i-1}_1, I, a_{i+1}^n) \subseteq I$, for every $a^n _1 \in  R$ and  $1 \leq i \leq n$. 
Note that  $a^j_i$ denotes the sequence $a_i, a_{i+1},..., a_j$.  $a^j_i$ is the empty symbol if $j< i$. Using this notation,
$f(a_1,..., a_i, b_{i+1},..., b_j, c_{j+1},..., c_n)$
will be written as $f(a^i_1, b^j_{i+1},c^n_{j+1})$. The  expression will be written in the form $f(a^i_1, b^{(j-i)}, c^n_{j+1}),$ where $b_{i+1} =... = b_j = b$.  
For non-empty subsets $H_1^n$ of $R$, define
$f(H^n_1) = \bigcup \{f(a^n_1) \ \vert \  a_i \in  H_i, 1 \leq i \leq n \}.$
Some important hyperideals such as nilradical, Jacobson radical, $n$-ary prime and primary hyperideals  of Krasner $(m, n)$-hyperrings  were introduced in \cite{sorc1}. A hyperideal $M$ of  $R$
is said to be maximal if for every hyperideal $N$ of $R$, $M \subseteq N \subseteq R$ implies that $N=M$ or $N=R$.
The Jacobson radical of a Krasner $(m, n)$-hyperring $R$
is the intersection of all maximal hyperideals of $R$ and it is denoted by $J_{(m,n)}(R)$. If $R$ does not have any maximal hyperideal, we let $J_{(m,n)}(R)=R$. A proper hyperideal $P$ of a Krasner $(m, n)$-hyperring $R$
  is called prime  if  $g(A_1^ n) \subseteq P$ for hyperideals $A_1^n$ of $R$ implies that $A_1 \subseteq P$ or $A_2 \subseteq P$ or $\cdots$ or $A_n \subseteq P$. By Lemma 4.5 in \cite{sorc1}, a proper hyperideal $P$ of a Krasner $(m, n)$-hyperring $R$ is  prime  if for all $a^n_ 1 \in R$, $g(a^n_ 1) \in P$ implies that $a_1 \in P$ or $\cdots$ or $a_n \in P$. 
 Let $I$ be a hyperideal in a  Krasner $(m, n)$-hyperring $R$ with
scalar identity. The radical  of $I$, denoted by ${\bf r^{(m,n)}}(I)$ is  the intersection is taken over all  prime hyperideals $P$ which contain $I$. If the set of all prime hyperideals containing $I$ is empty, then ${\bf r^{(m,n)}}(I)=R$.  It was shown that if $a \in {\bf r^{(m,n)}}(I)$, then 
 there exists $s \in \mathbb {N}$ such that $g(a^ {(s)} , 1_R^{(n-s)} ) \in I$ for $s \leq n$, or $g_{(l)} (a^ {(s)} ) \in I$ for $s = l(n-1) + 1$.
 A proper hyperideal $I$ of a  Krasner $(m, n)$-hyperring $R$ with the scalar identity $1_R$  is said to be a   primary hyperideal if $g(a^n _1) \in I$ and $a_i \notin I$ implies that $g(a_1^{i-1}, 1_R, a_{ i+1}^n) \in {\bf r^{(m,n)}}(I)$ for some $1 \leq i \leq n$. By Theorem 4.28 in \cite{sorc1}, ${\bf r^{(m,n)}}(I)$ is  a prime hyperideal of $R$ if $I$ is a primary hyperideal in a  Krasner $(m, n)$-hyperring $R$ with the scalar identity $1_R$.

The concept of $(k,n)$-absorbing (primary) hyperideals was studied by Hila et al. \cite{rev2}. 
Norouzi et al.  presented a new definition for normal hyperideals in Krasner  \cite{nour}.  Asadi and Ameri  studied direct limit of a direct system in the category of Krasner $(m,n)$-hyperrigs \cite{asadi}. 
Dongsheng  defined the notion of $\delta$-primary ideals in  a commutative ring  where  $\delta$ is a function that assigns to each ideal $I$  an ideal $\delta(I)$ of the same ring \cite{bmb2}.  Also, he and his colleague  proposed the notion of 2-absorbing $\delta$-primary ideals  which  unifies 2-absorbing ideals and 2-absorbing primary ideals in \cite{bmb3}. Ozel Ay et al.  extended the notion of $\delta$-primary  on Krasner hyperrings \cite{bmb4}. The notion of $\delta$-primary  hyperideals in Krasner $(m,n)$-hyperrings, which unifies the prime and primary hyperideals under one frame, was introduced in \cite{mah3}. 

In this paper, after presenting two classes of hyperideals in a Krasner $(m,n)$-hyperring, we introduce the notion of weakly strongly quasi-primary hyperideals. Throughout this article, we focus only on commutative Krasner $(m,n)$-hyperrings with a nonzero identity $1$. $R$ will be a commutative Krasner $(m,n)$-hyperring. The paper is orgnized as follows. In Section 2, we first define the concept of $n$-ary quasi-primary (briefly, $q$-primary) hyperideals of $R$ and then introduce the notion of $(k,n)$-absorbing quasi-primary as a generalization of the quasi-primary hyperideals. After the definition of the $(k,n)$-absorbing quasi-primary (briefly, $(k,n)$-absorbing $q$-primary) hyperideals, their chief properties will be shown. Section 3 is devoted for studing the notion of  strongly quasi-primary (briefly, $sq$-primary) hyperideals. In Section 4, we introduce weakly strongly quasi-primary (briefly, $wsq$-primary) hyperideals. The stability of this  notion with respect to various hyperring-theoretic constructions is studied.  Section 5, concerns the conclusion. 
\section{$n$-ary $q$-primary hyperideals}
In this section, we first study the notion of $n$-ary quasi-primary hyperideals of $R$ and then we extend  the concept to the notion of  $(k,n)$-absorbing quasi-primary. After the definition of the $(k,n)$-absorbing quasi-primary hyperideals, their  properties will be given.
\begin{definition}
 A proper hyperideal $P$ of $R$ is called  $n$-ary quasi-primary  (briefly, $q$-primary) provided that ${\bf r^{(m,n)}}(P)$ is an $n$-ary prime hyperideal of $R$.
\end{definition}
\begin{example} \label{classes} 
Suppose that $\mathbb{Z}_{12}=\{0,1,2,3,\cdots,11\}$ is the set of all congruence classes of integers modulo $12$ and $\mathbb{Z}^{\star}_{12}=\{1,5,7,11\}$ is multiplicative subgroup of units $\mathbb{Z}_{12}$. Construct $G$ as $\mathbb{Z}_{12}/\mathbb{Z}_{12}^{\star}$. Then we have $G=\{\bar{0},\bar{1},\bar{2},\bar{3},\bar{4},\bar{6}\}$ in which $\bar{0}=\{0\}$, $\bar{1}=\{1,5,7,11\}$, $\bar{2}=\bar{10}=\{2,10\}$, $\bar{3}=\bar{9}=\{3,9\}$, $\bar{4}=\bar{8}=\{4,8\}$, $\bar{6}=\{6\}$. Consider Krasner hyperring $(G,\boxplus,\circ)$ that for all $\bar{a},\bar{b} \in G$, $\bar{a} \circ \bar{b}=\overline{ab}$ and 2-ary hyperoperation $\boxplus$ is defined as follows:

\[\begin{tabular}{|c|c|c|c|c|c|c|} 
\hline $\boxplus$ & $\bar{0}$ & $\bar{1}$ & $\bar{2}$ & $\bar{3}$ & $\bar{4}$ & $\bar{6}$
\\ \hline $\bar{0}$ & $\bar{0}$ & $\bar{1}$ & $\bar{2}$ & $\bar{3}$& $\bar{4}$ & $\bar{6}$
\\ \hline $\bar{1}$ & $\bar{1}$ & $\bar{0},\bar{2},\bar{\bar{4}},\bar{6}$ & $\bar{1},\bar{3}$ & $\bar{2},\bar{4}$ & $\bar{1},\bar{3}$ & $\bar{1}$
\\ \hline $\bar{2}$ & $\bar{2}$ & $\bar{1},\bar{3}$ & $\bar{0},\bar{4}$ & $\bar{1}$ & $\bar{2},\bar{6}$ & $\bar{4}$
\\ \hline $\bar{3}$ & $\bar{3}$ & $\bar{2},\bar{4}$ & $\bar{1}$ & $\bar{0},\bar{6}$ & $\bar{1}$ & $\bar{3}$
\\ \hline $\bar{4}$ & $\bar{4}$ & $\bar{1},\bar{3}$ & $\bar{2},\bar{6}$ &  $\bar{1}$ & $\bar{0},\bar{4}$ & $\bar{2}$
\\ \hline $\bar{6}$ & $\bar{6}$ & $\bar{1}$ & $\bar{4}$ & $\bar{3}$ & $\bar{2}$ & $\bar{0}$
\\ \hline
\end{tabular}\]

$\vspace{0.3cm}$

Consider the hyperideal $P=\{\bar{0},\bar{4}\}$ of $G$. Then we have ${\bf r^{(2,2)}}(P)=\{\bar{0},\bar{2},\bar{4},\bar{6},\}$. It is easy to see that the radical of the hyperideal $P$ is prime and so $P$ is  $q$-primary. 
\end{example}




  

Let $(R_1, f_1, g_1)$ and $(R_2, f_2, g_2)$ be two Krasner $(m,n)$-hyperrings such that $1_{R_1}$ and $1_{R_2}$ be scalar identitis of $R_1$ and $R_2$, respectively. Then 
the $(m, n)$-hyperring $(R_1 \times R_2, f_1\times f_2 ,g_1 \times g_2 )$ is defined by m-ary hyperoperation
$f=f_1\times f_2 $ and n-ary operation $g=g_1 \times g_2$, as follows:

$f_1 \times f_2((a_{1}, b_{1}),\cdots,(a_m,b_m)) = \{(a,b) \ \vert \ \ a \in f_1(a_1^m), b \in f_2(b_1^m) \}$

$g_1 \times g_2 ((x_1,y_1),\cdots,(x_n,y_n)) =(g_1(x_1^n),g_2(y_1^n)) $,\\
for all $a_1^m,x_1^n \in R_1$ and $b_1^m,y_1^n \in R_2$ \cite{mah2}. 
\begin{theorem}
Suppose that  $(R_1, f_1, g_1)$ and $(R_2, f_2, g_2)$ are two Krasner $(m,n)$-hyperrings such that $1_{R_1}$ and $1_{R_2}$ be scalar identitis of $R_1$ and $R_2$, respectively. Assume that $P$ is a proper hyperideal of $R_1 \times R_2$. Then $P$ is an $n$-ary $q$-primary hyperideal of $R_1 \times R_2$ if and only if $P=P_1 \times R_2$ for some  $n$-ary $q$-primary hyperideal $P_1$ of $R_1$ or $P=R_1 \times P_2$ for some  $n$-ary $q$-primary hyperideal $P_2$ of $R_2$.
\end{theorem}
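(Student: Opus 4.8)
The plan is to reduce the statement to three facts about the direct product $R_1\times R_2$, all in the spirit of the classical ring-theoretic situation: (i) every hyperideal $P$ of $R_1\times R_2$ has the form $P=P_1\times P_2$ for a hyperideal $P_1$ of $R_1$ and a hyperideal $P_2$ of $R_2$; (ii) ${\bf r^{(m,n)}}(P_1\times P_2)={\bf r^{(m,n)}}(P_1)\times{\bf r^{(m,n)}}(P_2)$; and (iii) an $n$-ary prime hyperideal of $R_1\times R_2$ is either of the form $Q_1\times R_2$ with $Q_1$ an $n$-ary prime hyperideal of $R_1$, or of the form $R_1\times Q_2$ with $Q_2$ an $n$-ary prime hyperideal of $R_2$. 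Once these are in hand, the theorem follows by a short bookkeeping argument.

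For the forward implication, suppose $P$ is $n$-ary $q$-primary and write $P=P_1\times P_2$ using (i). By (ii), ${\bf r^{(m,n)}}(P)={\bf r^{(m,n)}}(P_1)\times{\bf r^{(m,n)}}(P_2)$ is an $n$-ary prime hyperideal of $R_1\times R_2$, so by (iii) one of the two factors is the whole hyperring. If ${\bf r^{(m,n)}}(P_1)=R_1$, then $1_{R_1}\in{\bf r^{(m,n)}}(P_1)$; since every power of $1_{R_1}$ equals $1_{R_1}$, this gives $1_{R_1}\in P_1$, hence $P_1=R_1$ and $P=R_1\times P_2$. Here $P_2$ is proper because $P$ is, and ${\bf r^{(m,n)}}(P_2)$ is $n$-ary prime by (iii), so $P_2$ is $n$-ary $q$-primary. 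The case ${\bf r^{(m,n)}}(P_2)=R_2$ is symmetric and gives $P=P_1\times R_2$ with $P_1$ an $n$-ary $q$-primary hyperideal of $R_1$. For the converse, if $P=P_1\times R_2$ with $P_1$ an $n$-ary $q$-primary hyperideal of $R_1$, then by (ii) ${\bf r^{(m,n)}}(P)={\bf r^{(m,n)}}(P_1)\times R_2$, which is $n$-ary prime by (iii) because ${\bf r^{(m,n)}}(P_1)$ is; hence $P$ is $n$-ary $q$-primary, and the case $P=R_1\times P_2$ is handled the same way.

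The main obstacle is establishing (i)--(iii) within the Krasner $(m,n)$-setting, and this is where the bulk of the work lies. For (i) I would let $P_1$ and $P_2$ be the coordinate projections of $P$, observe that $(a,0)=g\big((a,b),(1_{R_1},0),(1_{R_1\times R_2})^{(n-2)}\big)\in P$ and symmetrically $(0,b')\in P$, and then obtain every $(a,b')\in P_1\times P_2$ from $(a,b')\in f\big((a,0),(0,b'),0^{(m-2)}\big)\subseteq P$. For (iii), given an $n$-ary prime hyperideal $Q$, the identity $g\big((1_{R_1},0),(0,1_{R_2}),(1_{R_1\times R_2})^{(n-2)}\big)=(0,0)\in Q$ together with the element-wise primeness criterion (Lemma 4.5 of \cite{sorc1}) forces $(1_{R_1},0)\in Q$ or $(0,1_{R_2})\in Q$; absorbing the relevant coordinate then yields $R_1\times\{0\}\subseteq Q$ or $\{0\}\times R_2\subseteq Q$, and combining this with (i) pins down the shape of $Q$ and forces the surviving factor to be $n$-ary prime. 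For (ii) I would invoke the description of ${\bf r^{(m,n)}}$ recalled in the Introduction and use that $g(a^{(s)},1_R^{(n-s)})\in P$ implies $g(a^{(t)},1_R^{(n-t)})\in P$ for $t\geq s$ (because $P$ is a hyperideal and $0$ is absorbing for $g$), so that a common exponent can be chosen in the two coordinates; the iterated form $g_{(l)}$ is treated identically.
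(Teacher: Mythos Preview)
Your proof is correct and follows essentially the same strategy as the paper's: decompose $P$ as $P_1\times P_2$, use that ${\bf r^{(m,n)}}(P_1\times P_2)={\bf r^{(m,n)}}(P_1)\times{\bf r^{(m,n)}}(P_2)$, and invoke the classification of $n$-ary prime hyperideals of a direct product. The paper's argument is terser, leaving your facts (i)--(iii) and the step from ${\bf r^{(m,n)}}(P_i)=R_i$ to $P_i=R_i$ implicit, while you spell these out; otherwise the two proofs coincide.
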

\begin{proof}
$\Longrightarrow$ Let $P$ be an $n$-ary $q$-primary hyperideal of $R_1 \times R_2$. Since $P$ is a proper hyperideal of $R_1 \times R_2$, then there exist some hyperideals $P_1$ and $P_2$ of $R_1$ and $R_2$, respectively, such that  $P=P_1 \times P_2$.  Since $P$ is $n$-ary $q$-primary, then ${\bf r^{(m,n)}}(P)={\bf r^{(m,n)}}(P_1) \times {\bf r^{(m,n)}}(P_2)$ is an $n$-ary prime hyperideal of $R_1 \times R_2$. This implies that ${\bf r^{(m,n)}}(P_1)$ is an $n$-ary prime hyperideal of $R_1$ and ${\bf r^{(m,n)}}(P_2)=R_2$ or ${\bf r^{(m,n)}}(P_2)$ is an $n$-ary prime hyperideal of $R_2$ and ${\bf r^{(m,n)}}(P_1)=R_1$ which means $P=P_1 \times R_2$ or $P=R_1 \times P_2$ for some $n$-ary $q$-primary hyperideals $P_1$ and $P_2$ of $R_1$ and $R_2$, respectively.\\
$\Longleftarrow$ It is straightforward.
\end{proof}
We give the following results obtained by the previous theorem.
\begin{corollary}
Let $(R_i, f_i, g_i)$ be a Krasner $(m,n)$-hyperring for each $1 \leq i \leq t$ such that $1_{R_i}$ is scalar identity of $R_i$. Assume that $P$ is a proper hyperideal of $R_1 \times \cdots \times R_t$. Then $P$ is an $n$-ary $q$-primary hyperideal of $R_1 \times \cdots \times R_t$ if and only if $P=P_1 \times P_2 \times \cdots \times P_t$ such that $P_u$ is an  $n$-ary $q$-primary hyperideal of $R_u$ for some $1 \leq u \leq t$ and $P_k=R_k$ for all $1 \leq k \leq t$ such that $k \neq u$.
\end{corollary}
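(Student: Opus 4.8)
The plan is to proceed by induction on $t$, taking the preceding theorem as the base case $t=2$ (the case $t=1$ being trivial). For the inductive step I would exploit the natural identification of Krasner $(m,n)$-hyperrings $R_1 \times \cdots \times R_t \cong R_1 \times (R_2 \times \cdots \times R_t)$, under which a hyperideal of the left-hand side corresponds to a hyperideal of $R_1 \times S$, where $S := R_2 \times \cdots \times R_t$, and under which the radical operator ${\bf r^{(m,n)}}$ and the notion of $n$-ary prime hyperideal are compatible, so that the theorem may be legitimately applied to the pair $R_1$, $S$.

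First I would invoke the fact (already used in the proof of the theorem) that every proper hyperideal of a finite product of these hyperrings decomposes as a product of hyperideals of the factors, so that $P = P_1 \times \cdots \times P_t$ for suitable hyperideals $P_i$ of $R_i$; writing $Q := P_2 \times \cdots \times P_t$, this reads $P = P_1 \times Q$ inside $R_1 \times S$. Applying the theorem to $R_1 \times S$ then splits into two cases: either $P = P_1 \times S$ with $P_1$ an $n$-ary $q$-primary hyperideal of $R_1$, or $P = R_1 \times Q$ with $Q$ an $n$-ary $q$-primary hyperideal of $S$. In the first case we are done with $u=1$ and $P_k = R_k$ for all $k \geq 2$. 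In the second case the induction hypothesis applied to $Q$ produces $Q = P_2 \times \cdots \times P_t$ with $P_u$ an $n$-ary $q$-primary hyperideal of $R_u$ for some $2 \leq u \leq t$ and $P_k = R_k$ for the remaining indices $k \geq 2$; combined with $P_1 = R_1$ this yields the claimed decomposition.

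For the converse I would argue directly: if $P = P_1 \times \cdots \times P_t$ with $P_u$ an $n$-ary $q$-primary hyperideal of $R_u$ and $P_k = R_k$ for every $k \neq u$, then ${\bf r^{(m,n)}}(P) = R_1 \times \cdots \times {\bf r^{(m,n)}}(P_u) \times \cdots \times R_t$, and a hyperideal of a finite product in which all but one coordinate is the whole corresponding factor is $n$-ary prime precisely when that one coordinate is $n$-ary prime. Since ${\bf r^{(m,n)}}(P_u)$ is prime in $R_u$ by hypothesis, ${\bf r^{(m,n)}}(P)$ is prime in $R_1 \times \cdots \times R_t$, i.e. $P$ is $n$-ary $q$-primary.

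The main obstacle, such as it is, will be the routine but somewhat delicate bookkeeping that justifies the associativity identification $R_1 \times \cdots \times R_t \cong R_1 \times (R_2 \times \cdots \times R_t)$ at the level of the $(m,n)$-hyperring structure, together with the compatibility of ${\bf r^{(m,n)}}$ and of $n$-ary primeness under it; these verifications must be recorded carefully so that the theorem can be invoked in the inductive step, after which the argument is a clean two-case induction.
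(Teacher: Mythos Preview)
Your proposal is correct and is precisely the argument the paper has in mind: the corollary is stated immediately after the two-factor theorem with the remark ``We give the following results obtained by the previous theorem'' and no further proof, so the intended derivation is exactly the induction on $t$ (via the identification $R_1 \times \cdots \times R_t \cong R_1 \times (R_2 \times \cdots \times R_t)$) that you outline. Your direct verification of the converse is also the expected one.
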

In his paper \cite{rev2}, Hila et al. introduced a generalization of $n$-ary prime hyperideals of Krasner $(m,n)$-hyperrings, which they defined as $(k,n)$-absorbing hyperideals. Let $k$ be a positive integer. A proper hyperideal $P$ of $R$ is said to be  $(k,n)$-absorbing if whenever $g(r_1^{kn-k+1}) \in P$ for $r_1^{kn-k+1} \in R$, then there are $(k-1)n-k+2$ of the $r_i^,$s whose $g$-product is in $P$. Moreover, they generalized this concept to the notion of $(k,n)$-absorbing primary hyperideals. A proper hyperideal $P$ of $R$ is called $(k,n)$-absorbing primary if whenever $g(r_1^{kn-k+1}) \in P$ for some $r_1^{kn-k+1} \in R$, then $g(r_1^{(k-1)n-k+2)}) \in P$ or a $g$-product of $(k-1)n-k+2$ of the $r_i^,$s except $g(r_1^{(k-1)n-k+2})$ is in ${\bf r^{(m,n)}}(P)$. Theorem 4.6 in \cite{rev2} shows that the radical of an $(k,n)$-absorbing primary hyperideal is an $(k,n)$-absorbing hyperideal of $R$. Now, we aim to study hyperiseals whose radical is an $(k,n)$-absorbing hyperideal of $R$.
\begin{definition}
 A proper hyperideal $P$ of $R$ is called  $(k,n)$-absorbing quasi-primary  (briefly, $(k,n)$-absorbing $q$-primary) if ${\bf r^{(m,n)}}(P)$ is an $(k,n)$-absorbing hyperideal of $R$.
\end{definition}
If $P$ is an $(k,n)$-absorbing $q$-primary hyperideal of $R$ such that ${\bf r^{(m,n)}}(P)=Q$, then we say that $P$ is an $Q$-$(k,n)$-absorbing $q$-primary hyperideal of $R$.
\begin{example} 
Consider the Krasner $(2,3)$-hyperring $(K=[0,1],+, \cdot)$ such that $"\cdot"$ is the usual multiplication on real numbers and  2-ary hyperoperation $"+"$ is defined as follows:
\[
a + b=
\begin{cases}
\{max\{a, b\}\}, & \text{if $a \neq b$}\\
$[0,a]$, & \text{if $a =b.$}
\end{cases}\]

$\vspace{0.2cm}$

Then the hyperideal $S=[0,0.5]$ is a $(2,2)$-absorbing   $q$-primary hyperideal of $K$.
\end{example}
\begin{theorem}
$(1)$ Every $n$-ary $q$-primary hyperideal of $R$ is $(2,n)$-absorbing $q$-primary.

$(2)$ Every $(k,n)$-absorbing primary hyperideal of $R$ is $(k,n)$-absorbing $q$-primary.
\end{theorem}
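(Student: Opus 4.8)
The plan is to reduce both assertions to the already-recorded facts about radicals. For part (1), the key observation is that an $(2,n)$-absorbing hyperideal is, by the definition of Hila et al. with $k=2$, exactly a hyperideal $Q$ such that whenever $g(r_1^{2n-1})\in Q$ for $r_1^{2n-1}\in R$, then some $g$-product of $n$ of the $r_i$'s lies in $Q$. So suppose $P$ is $n$-ary $q$-primary; then $Q:={\bf r^{(m,n)}}(P)$ is an $n$-ary prime hyperideal by definition. First I would show that an $n$-ary prime hyperideal is automatically $(2,n)$-absorbing: given $g(r_1^{2n-1})\in Q$, apply primeness (in the element-wise form of Lemma 4.5 in \cite{sorc1}) $n-1$ times, peeling off one factor at a time, to conclude that some single $r_i\in Q$; padding that $r_i$ with $n-1$ copies of the scalar identity $1_R$ gives a $g$-product of $n$ of the entries (counting the identities as trivial "padding" entries — here one must be slightly careful about how the $(2,n)$-absorbing condition counts entries, so I would instead directly argue that if $r_i \in Q$ then the $g$-product $g(r_i,1_R^{(n-1)})=r_i \in Q$, and more transparently, that any $g$-product of $n$ entries that includes $r_i$ as a factor lies in $Q$ since $Q$ is a hyperideal). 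This shows ${\bf r^{(m,n)}}(P)$ is $(2,n)$-absorbing, i.e. $P$ is $(2,n)$-absorbing $q$-primary.

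For part (2), suppose $P$ is an $(k,n)$-absorbing primary hyperideal of $R$. By Theorem 4.6 in \cite{rev2}, the radical ${\bf r^{(m,n)}}(P)$ is an $(k,n)$-absorbing hyperideal of $R$. But by the definition of $(k,n)$-absorbing $q$-primary hyperideal, this is precisely the statement that $P$ is $(k,n)$-absorbing $q$-primary. So part (2) is immediate from the cited theorem together with the definition.

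Regarding part (1) again: note that part (1) also follows from part (2) combined with the (presumably earlier, or easily checked) fact that every $n$-ary $q$-primary hyperideal of $R$ is $(2,n)$-absorbing primary — but since that chain may not be available, the cleanest self-contained route is the direct argument above via primeness of the radical. The main obstacle I anticipate is purely bookkeeping: matching the precise index count in the $(k,n)$-absorbing definition (products of $kn-k+1$ entries yielding a sub-product of $(k-1)n-k+2$ entries) with what the prime/primary conditions deliver, and handling the role of the scalar identity $1_R$ when padding shorter $g$-products up to the required arity. Once the indices are aligned, both parts are one-line consequences of Theorem 4.6 in \cite{rev2} and the element-wise characterization of prime hyperideals in \cite{sorc1}.
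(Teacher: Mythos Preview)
Your proposal is correct and follows essentially the same route as the paper: for part~(2) the paper also just invokes Theorem~4.6 in \cite{rev2}, and for part~(1) it too argues that ${\bf r^{(m,n)}}(P)$ is prime and hence $(2,n)$-absorbing. The only difference is that the paper treats ``prime $\Rightarrow$ $(2,n)$-absorbing'' as a known fact from \cite{rev2} and asserts it in one line, whereas you spell out (and worry over) the element-peeling argument; your bookkeeping concerns are unnecessary, since this implication is already established in the Hila et al.\ framework.
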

\begin{proof}
$(1)$ Let $P$ be an $n$-ary $q$-primary hyperideal of $R$. This means that ${\bf r^{(m,n)}}(P)$ is an $n$-ary prime hyperideal of $R$. So ${\bf r^{(m,n)}}(P)$ is an $(2,n)$-absorbing hyperideal of $R$. Thus $P$ is $(2,n)$-absorbing $q$-primary of $R$.

$(2)$ Let $P$ be an $(k,n)$-absorbing primary hyperideal of $R$. Then ${\bf r^{(m,n)}}(P)$ is an $(k,n)$-absorbing hyperideal of $R$ by Theorem 4.6 in \cite{rev2}. This means that $P$ is an $(k,n)$-absorbing $q$-primary hyperideal of $R$.
\end{proof}
\begin{theorem}
Let $P_1^t$ be $Q$-$(k,n)$-absorbing $q$-primary hyperideals of $R$ for some $(k,n)$-absorbing hyperideal $Q$ of $R$. Then $P=\cap_{i=1}^t P_i$ is a $Q$-$(k,n)$-absorbing $q$-primary hyperideal of $R$.
\end{theorem}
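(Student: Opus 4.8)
The plan is to reduce the statement to a single fact about radicals, namely that $\mathbf{r^{(m,n)}}\!\left(\bigcap_{i=1}^t P_i\right) = \bigcap_{i=1}^t \mathbf{r^{(m,n)}}(P_i)$, and then invoke the hypothesis that each $\mathbf{r^{(m,n)}}(P_i)$ equals the fixed $(k,n)$-absorbing hyperideal $Q$. First I would check that $P = \bigcap_{i=1}^t P_i$ is indeed a proper hyperideal of $R$: it is a hyperideal as an intersection of hyperideals (the $m$-ary subhypergroup property and the absorption property $g(a_1^{i-1}, P, a_{i+1}^n) \subseteq P$ both pass to intersections), and it is proper since, e.g., $P \subseteq P_1 \subsetneq R$. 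Then the heart of the argument is the radical computation.

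For the radical identity, one inclusion is immediate: since $P \subseteq P_i$ for each $i$, monotonicity of $\mathbf{r^{(m,n)}}$ (which follows directly from its definition as the intersection of all prime hyperideals containing the hyperideal) gives $\mathbf{r^{(m,n)}}(P) \subseteq \mathbf{r^{(m,n)}}(P_i)$, hence $\mathbf{r^{(m,n)}}(P) \subseteq \bigcap_{i=1}^t \mathbf{r^{(m,n)}}(P_i)$. For the reverse inclusion I would use the element-wise description of the radical recalled in the introduction: if $a \in \bigcap_{i=1}^t \mathbf{r^{(m,n)}}(P_i)$, then for each $i$ there is $s_i$ with an appropriate $g$-power of $a$ lying in $P_i$ (either $g(a^{(s_i)}, 1_R^{(n-s_i)}) \in P_i$ or $g_{(l_i)}(a^{(s_i)}) \in P_i$). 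Taking $s$ large enough — a common multiple / maximum of the exponents, adjusted to the $l(n-1)+1$ shape — a single $g$-power of $a$ lands in every $P_i$ simultaneously, hence in $P$, so $a \in \mathbf{r^{(m,n)}}(P)$. This gives $\bigcap_{i=1}^t \mathbf{r^{(m,n)}}(P_i) \subseteq \mathbf{r^{(m,n)}}(P)$ and therefore equality.

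Combining the two, $\mathbf{r^{(m,n)}}(P) = \bigcap_{i=1}^t \mathbf{r^{(m,n)}}(P_i) = \bigcap_{i=1}^t Q = Q$, which is an $(k,n)$-absorbing hyperideal of $R$ by hypothesis. Hence $P$ is a $Q$-$(k,n)$-absorbing $q$-primary hyperideal of $R$, as claimed.

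The main obstacle I anticipate is the bookkeeping in the reverse radical inclusion: aligning the various exponents $s_i$ (and the auxiliary indices $l_i$ appearing when $s_i = l_i(n-1)+1$) into one exponent that works for all $t$ hyperideals at once, using associativity of the $n$-ary operation $g$ to absorb scalar identities $1_R$ and to pad powers. This is routine given the reduction properties of $g$ and the stated characterization of membership in the radical, but it is the one place where care is needed; everything else is formal.
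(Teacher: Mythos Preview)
Your proposal is correct and follows essentially the same approach as the paper: the paper's proof is the single line ${\bf r^{(m,n)}}(P)={\bf r^{(m,n)}}(\cap_{i=1}^t P_i)=\cap_{i=1}^t{\bf r^{(m,n)}}(P_i)=Q$, and you have simply supplied the justification for the middle equality (and the properness check) that the paper takes for granted.
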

\begin{proof}
Assume that $Q$ is an $(k,n)$-absorbing hyperideal of $R$ and $P_1^t$ are $Q$-$(k,n)$-absorbing $q$-primary hyperideals of $R$. We have ${\bf r^{(m,n)}}(P)={\bf r^{(m,n)}}(\cap_{i=1}^t P_i)=\cap_{i=1}^t{\bf r^{(m,n)}}(P_i)=Q$ which shows $P$ is a $Q$-$(k,n)$-absorbing $q$-primary hyperideal of $R$.
\end{proof}
\begin{theorem}
Let $P$ be a proper hyperideal of $R$. Then $P$ is an $(k,n)$-absorbing $q$-primary hyperideal of $R$ if and only if    $g(r_1^{kn-k+1}) \in P$ for $r_1^{kn-k+1} \in R$ implies that  there exist $(k-1)n-k+2$ of the $r_i^,$s whose $g$-product is in ${\bf r^{(m,n)}}(P)$.
\end{theorem}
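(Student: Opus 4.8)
The plan is to read both implications directly off the definition of an $(k,n)$-absorbing $q$-primary hyperideal --- that ${\bf r^{(m,n)}}(P)$ be $(k,n)$-absorbing --- together with the element-wise description of the radical recalled in the Introduction: $a\in {\bf r^{(m,n)}}(I)$ exactly when some power $g(a^{(s)},1_R^{(n-s)})$ with $s\le n$, or $g_{(l)}(a^{(s)})$ with $s=l(n-1)+1$, lies in $I$. I will write $a^{[s]}$ for such a power and use freely that powers compose, that $a^{[s]}\in I$ forces $a\in {\bf r^{(m,n)}}(I)$, and that ${\bf r^{(m,n)}}({\bf r^{(m,n)}}(P))={\bf r^{(m,n)}}(P)$ (a prime hyperideal contains $P$ iff it contains ${\bf r^{(m,n)}}(P)$).

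\textbf{Forward direction.} Assume $P$ is $(k,n)$-absorbing $q$-primary, so $Q:={\bf r^{(m,n)}}(P)$ is $(k,n)$-absorbing. If $g(r_1^{kn-k+1})\in P$, then $g(r_1^{kn-k+1})\in P\subseteq {\bf r^{(m,n)}}(P)=Q$, and the defining property of the $(k,n)$-absorbing hyperideal $Q$ produces $(k-1)n-k+2$ of the $r_i$'s whose $g$-product lies in $Q={\bf r^{(m,n)}}(P)$, as wanted.

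\textbf{Converse.} Assume the stated condition; I must show $Q:={\bf r^{(m,n)}}(P)$ is $(k,n)$-absorbing. Let $g(q_1^{kn-k+1})\in Q$. By the radical description there is $s$ with $y^{[s]}\in P$ for $y:=g(q_1^{kn-k+1})$. Here is the one computational point: since $R$ is commutative and $g$ is $n$-ary associative, $y^{[s]}$ depends only on the multiset of its non-identity factors, so (recalling $kn-k+1=k(n-1)+1$, which makes the right-hand side below a legitimate $k$-fold iterated $g$-product)
\[
y^{[s]}=g\bigl(q_1^{[s]},\dots,q_{kn-k+1}^{[s]}\bigr)\in P .
\]
Applying the hypothesis to the $kn-k+1$ elements $q_1^{[s]},\dots,q_{kn-k+1}^{[s]}$ yields indices $i_1,\dots,i_{(k-1)n-k+2}$ with $g\bigl(q_{i_1}^{[s]},\dots,q_{i_{(k-1)n-k+2}}^{[s]}\bigr)\in {\bf r^{(m,n)}}(P)$. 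The same multiset argument (now with $(k-1)n-k+2=(k-1)(n-1)+1$) identifies this element with $w^{[s]}$, where $w:=g\bigl(q_{i_1},\dots,q_{i_{(k-1)n-k+2}}\bigr)$; since $w^{[s]}\in {\bf r^{(m,n)}}(P)$ we conclude $w\in {\bf r^{(m,n)}}({\bf r^{(m,n)}}(P))={\bf r^{(m,n)}}(P)=Q$. Thus $(k-1)n-k+2$ of the $q_i$'s have $g$-product in $Q$, so $Q$ is $(k,n)$-absorbing and $P$ is $(k,n)$-absorbing $q$-primary.

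\textbf{Main obstacle.} The only non-formal step is the displayed identity $y^{[s]}=g(q_1^{[s]},\dots,q_{kn-k+1}^{[s]})$ and its analogue for the chosen sub-product: justifying these needs commutativity and careful handling of the two regimes of the radical description --- $s\le n$, where the power is $g(\cdot^{(s)},1_R^{(n-s)})$ and the padding identities must be tracked, versus $s=l(n-1)+1$, where it is the iterate $g_{(l)}(\cdot^{(s)})$ --- together with a check that the resulting factor counts again have the form $j(n-1)+1$. Once this bookkeeping is settled, everything else is substitution into the definitions.
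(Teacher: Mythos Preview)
Your proof is correct and follows essentially the same route as the paper's: the forward direction is identical, and for the converse both you and the paper pass from $g(q_1^{kn-k+1})\in{\bf r^{(m,n)}}(P)$ to a power lying in $P$, rewrite that power as the iterated $g$-product of the individual powers $q_i^{[s]}$, apply the hypothesis to obtain a sub-$g$-product in ${\bf r^{(m,n)}}(P)$, and then descend back to the $q_i$'s. Your write-up is in fact slightly cleaner: you avoid the paper's unnecessary ``assume all other sub-products are outside the radical'' framing and are explicit both about the identity $y^{[s]}=g(q_1^{[s]},\dots,q_{kn-k+1}^{[s]})$ and about the use of ${\bf r^{(m,n)}}({\bf r^{(m,n)}}(P))={\bf r^{(m,n)}}(P)$, which the paper invokes only implicitly with ``which means''.
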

\begin{proof}
$\Longrightarrow$ Let   $g(r_1^{kn-k+1}) \in P$ for some $r_1^{kn-k+1} \in R$. Then $g(r_1^{kn-k+1}) \in {\bf r^{(m,n)}}(P)$. Since $P$ is an $(k,n)$-absorbing $q$-primary hyperideal of $R$, then ${\bf r^{(m,n)}}(P)$ is an $(k,n)$-absorbing hyperideal of $R$. Therefore there exist $(k-1)n-k+2$ of the $r_i^,$s whose $g$-product is in ${\bf r^{(m,n)}}(P)$.

$\Longleftarrow$ Assume that  $g(r_1^{kn-k+1}) \in {\bf r^{(m,n)}}(P)$ for some $r_1^{kn-k+1} \in R$ such that all products of $(k-1)n-k+2$ of the $r_i^,$s, other than  $g(r_1^{(k-1)n-k+2})$, are not in ${\bf r^{(m,n)}}(P)$. Since $g(r_1^{kn-k+1}) \in {\bf r^{(m,n)}}(P)$, then there exists $s \in \mathbb{N}$ such that if $s \leq n$, then $g(g(r_1^{kn-k+1})^{(s)},1^{(n-s)}) \in P$ and if $s>n$, $s=l(n-1)+1$, then $g_{(l)}(g(r_1^{kn-k+1})^{(s)}) \in P$. In the former case, we get $g(g(r_1)^{(s)},\cdots,g(r_{kn-k+1})^{(s)},1^{(n-s)}) \in P$. By the assumption, we have 

$g(g(r_1)^{(s)},\cdots,g(r_{(k-1)n-k+2})^{(s)},1^{(n-s)})=g(g(r_1^{(k-1)n-k+2})^{(s)},1^{(n-s)}) \in {\bf r^{(m,n)}}(P)$\\ which means $ g(r_1^{(k-1)n-k+2}) \in {\bf r^{(m,n)}}(P)$. This shows that $P$ is an $(k,n)$-absorbing $q$-primary hyperideal of $R$. By a similar argument, we can prove the claim for the other case.
\end{proof}

\begin{theorem} 
Let $P$ be an $(k,n)$-absorbing $q$-primary hyperideal of $R$. Then $P$ is an  $(u,n)$-absorbing $q$-primary hyperideal of $R$ for all $u>n$. 
\end{theorem}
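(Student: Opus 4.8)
The plan is to reduce the statement to a monotonicity property of $(k,n)$-absorbing hyperideals: if $Q$ is an $(k,n)$-absorbing hyperideal of $R$, then $Q$ is an $(u,n)$-absorbing hyperideal for every $u \geq k$. Granting this, the theorem is immediate: with $Q = {\bf r^{(m,n)}}(P)$, the hypothesis says $Q$ is $(k,n)$-absorbing, hence $(u,n)$-absorbing, so $P$ is $(u,n)$-absorbing $q$-primary by definition (and $P$ is proper since $Q$ is).

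To prove the monotonicity property I would argue by induction on $u$, so it is enough to show that an $(k,n)$-absorbing hyperideal $Q$ is $(k+1,n)$-absorbing. First I would take $g(r_1^{(k+1)n-k}) \in Q$, reading $g$ as the iteration $g_{(k+1)}$ (legitimate since $(k+1)n-k = (k+1)(n-1)+1$), and set $a = g(r_1^n)$. By generalized associativity, $g(a, r_{n+1}^{(k+1)n-k}) \in Q$, an expression with $1 + ((k+1)n-k-n) = kn-k+1$ entries; applying the $(k,n)$-absorbing hypothesis of $Q$ to it produces $(k-1)n-k+2$ of the entries $a, r_{n+1}, \cdots, r_{(k+1)n-k}$ whose $g$-product lies in $Q$.

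Then I would split into two cases. If $a$ is one of those $(k-1)n-k+2$ entries, expanding $a = g(r_1^n)$ turns the $g$-product into a $g$-product of $n + ((k-1)n-k+1) = kn-k+1$ of the original $r_i$'s contained in $Q$; since $kn-k+1 = ((k+1)-1)n-(k+1)+2$, this is precisely what the definition of $(k+1,n)$-absorbing demands. If $a$ is not among them, I would already have $(k-1)n-k+2$ of the $r_i$'s (all of index exceeding $n$) with $g$-product in $Q$, and since $Q$ is a hyperideal I may $g$-multiply in any $n-1$ of the still-unused $r_i$'s without leaving $Q$; as $(k+1)n-k-((k-1)n-k+2) = 2n-2 \geq n-1$, enough such elements are available, giving a $g$-product of $kn-k+1$ of the $r_i$'s in $Q$ — again the required conclusion.

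I expect the only delicate point to be the index bookkeeping: keeping the three counts $(k-1)n-k+2$, $kn-k+1$ and $(k+1)n-k$ straight, confirming that the expanded product in the first case has exactly the length called for by the definition of $(k+1,n)$-absorbing, and checking in the second case that the leftover pool of $r_i$'s (of size $2n-2$) is large enough to pad up to length $kn-k+1$. Everything else is a routine appeal to the definitions and to the generalized associativity of $g$, so I foresee no real obstacle.
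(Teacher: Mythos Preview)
Your proposal is correct and follows the same reduction as the paper: both arguments observe that, by definition, it suffices to show ${\bf r^{(m,n)}}(P)$ is $(u,n)$-absorbing, and then invoke the monotonicity of the $(k,n)$-absorbing property in its first index. The only difference is that the paper quotes this monotonicity as Theorem~3.7 of Hila et al.\ \cite{rev2}, whereas you supply a self-contained inductive proof of it; your index bookkeeping is sound.
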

\begin{proof}
Let $P$ be an $(k,n)$-absorbing $q$-primary hyperideal of $R$. Then ${\bf r^{(m,n)}}(P)$ is an $(k,n)$-absorbing hyperideal of $R$. By Theorem 3.7 in \cite{rev2}, we conclude that ${\bf r^{(m,n)}}(P)$ is an $(u,n)$-absorbing hyperideal of $R$ for all $u>n$. This means that  $P$ is an $(u,n)$-absorbing $q$-primary hyperideal of $R$ for all $u>n$.
\end{proof}
\section{$n$-ary $sq$-primary hyperideals}
 Our aim in  this section is to define and study the notion of  strongly quasi-primary  hyperideals. Indeed, these hyperideals are an intermediate class of primary hyperideals and $q$-primary hyperideals.

\begin{definition}
Let $P$ be a proper hyperideal of $R$. $P$ refers to an $n$-ary strongly quasi primary (briefly, $sq$-primary) hyperideal if $r_1^n \in R$ and $g(r_1^n) \in P$ imply $g(r_i^{(2)},1^{(n-2)}) \in P$ or $g(r_1^{i-1},1,r_{i+1}^n) \in {\bf r^{(m,n)}}(P)$ for some $1 \leq i \leq n$.
\end{definition}
\begin{example}
 
Consider the  Krasner $(3,3)$-hyperring $(H=\{0,1,2\},f,g)$ such that    3-ary hyperoeration $f$ and 3-ary operation $g$ are defined as follow:
\[f(0^{(3)})=0, \ \ \ f(0^{(2)},1)=1, \ \ \ f(0,1^{(2)})=1, \ \ \ f(1^{(3)})=1, \ \ \ f(1^{(2)},2)=H\]
\[f(0,1,2)=H, \ \ \ f(0^{(2)},2)=2,\ \ \ f(0,a^{(2)})=2,\ \ \ f(1,2^{(2)})=H, \ \ \ f(2^{(3)})=2\]
$\ \ \ g(1^{(3)})=1,\ \ \ \ g(1^{(2)},2)=g(1,2^{(2)})=g(2^{(3)})=2$\\

and for $a_1^2 \in H, g(0,a_1^2)=0$.
Then the hyperideal  $T=\{0,2\}$  is a 3-ary $sq$-primary hyperideal of $H$. 
\end{example}
\begin{theorem} \label{ali}
Let $P$ be a proper hyperideal of $R$. If $P$ is an $n$-ary $sq$-primary hyperideal of $R$, then $P$ is an $n$-ary $q$-primary hyperideal of $R$.
\end{theorem}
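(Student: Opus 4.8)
The plan is to prove that $ {\bf r^{(m,n)}}(P)$ is an $n$-ary prime hyperideal of $R$; since $P$ is $q$-primary precisely when this holds, that suffices. First I would check that $ {\bf r^{(m,n)}}(P)$ is proper: otherwise $1\in {\bf r^{(m,n)}}(P)$, and the description of the radical recalled in the introduction forces $g(1^{(s)},1^{(n-s)})=1\in P$, contradicting properness of $P$. Next, using Lemma 4.5 of \cite{sorc1} together with the fact that every $g$-product of $n$ elements can be regrouped (via associativity of $g$ and the scalar identity $1$) into the form $g(a,b,1^{(n-2)})$ for suitable $a,b$, I would reduce $n$-ary primeness of $ {\bf r^{(m,n)}}(P)$ to the single two-variable implication: $g(a,b,1^{(n-2)})\in {\bf r^{(m,n)}}(P)$ forces $a\in {\bf r^{(m,n)}}(P)$ or $b\in {\bf r^{(m,n)}}(P)$.

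To establish this implication, suppose $g(a,b,1^{(n-2)})\in {\bf r^{(m,n)}}(P)$. By the radical description some ``power'' of this element lies in $P$, and, absorbing the identities and regrouping, that power can be written as $g(A,B,1^{(n-2)})$, where $A$ and $B$ are the corresponding powers of $a$ and $b$; hence $g(A,B,1^{(n-2)})\in P$. Now the $sq$-primary hypothesis applied to the tuple $(A,B,1,\dots ,1)$ gives, at the first coordinate, either $g(A^{(2)},1^{(n-2)})\in P$, which is itself a power of $a$ and so puts $a\in {\bf r^{(m,n)}}(P)$, or $g(1,B,1^{(n-2)})=B\in {\bf r^{(m,n)}}(P)$, which puts $b\in {\bf r^{(m,n)}}(P)$ since $B$ is a power of $b$ and $ {\bf r^{(m,n)}}$ is idempotent. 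Either way the implication holds, so $ {\bf r^{(m,n)}}(P)$ is $n$-ary prime and $P$ is $q$-primary.

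The step I expect to need the most care is the reduction of $n$-ary primeness of $ {\bf r^{(m,n)}}(P)$ to the two-variable implication: one must verify that regrouping $g(a_1^n)$ as $g(a_1,c,1^{(n-2)})$ with $c=g(a_2,\dots ,a_n,1)$, then peeling off $a_2$ from $c$, and so on, really does allow one to conclude $a_j\in {\bf r^{(m,n)}}(P)$ for some $j$ from $g(a_1^n)\in {\bf r^{(m,n)}}(P)$ — this is where the $n$-ary condition has to be unwound carefully — and that $ {\bf r^{(m,n)}}(P)$ is genuinely a hyperideal, so that ``$x$ a power of $y$ and $x\in {\bf r^{(m,n)}}(P)$'' is equivalent to ``$y\in {\bf r^{(m,n)}}(P)$''. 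The remaining point, namely rewriting a power of $g(a,b,1^{(n-2)})$ as $g(A,B,1^{(n-2)})$ in both the $s\le n$ regime and the $s=l(n-1)+1$ regime, is routine from associativity and commutativity of $g$ and the absorbing behaviour of $1$.
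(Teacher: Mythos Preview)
Your proposal is correct and follows essentially the same route as the paper. Both arguments take $g(r_1^n)\in{\bf r^{(m,n)}}(P)$, pass to a power lying in $P$, rewrite that power in the two-slot form $g(A,B,1^{(n-2)})$ with $A$ a power of one factor and $B$ a power of the remaining product, apply the $sq$-primary hypothesis at that slot to get $A^{(2)}$-term in $P$ or $B\in{\bf r^{(m,n)}}(P)$, and then iterate (``continue the process'' in the paper, your explicit peeling-off step) to reach a single $r_j\in{\bf r^{(m,n)}}(P)$. The only cosmetic difference is that you isolate the two-variable implication as a lemma before unwinding the $n$-ary primeness, whereas the paper runs the iteration inside a single computation; your explicit check that ${\bf r^{(m,n)}}(P)$ is proper is a small addition the paper leaves implicit.
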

\begin{proof}
Let $P$ be an $n$-ary $sq$-primary hyperideal of $R$. Assume that $g(r_1^n) \in {\bf r^{(m,n)}}(P)$ for some $r_1^n \in R$. Then there exists $s \in \mathbb{N}$ such that if $s \leq n$, then $g(g(r_1^n)^{(s)},1^{(n-s)}) \in P$. By
associativity we get
 
  $\hspace{1.5cm}g(r_i^{(s)},g(r_1^{i-1},1,r_{i+1}^n)^{(s)},1^{(n-2s)})$

$\hspace{2cm}=g(r_i^{(s)},g(r_1^{i-1},1,r_{i+1}^n)^{(s)},g(1^{(n)}),1^{(n-2s-1)})$

$\hspace*{2cm}=g(g(r_i^{(s)},1^{(n-s)}),g(g(r_1^{i-1},1,r_{i+1}^n)^{(s)},1^{(n-s)}),1^{(n-2)}) $

$\hspace{2cm}\in P$.\\
Since $P$ is $sq$-primary,  then $g(g(r_i^{(s)},1^{(n-s)})^{(2)},1^{(n-2)})=g(r_i^{(2s)},1^{(n-2s)}) \in P$ or $g(g(r_1^{i-1},1,r_{i+1}^n)^{(s)},1^{(n-s)}),1^{(n-1)})=g(g(r_1^{i-1},1,r_{i+1}^n)^{(s)},1^{(n-s)}) \in {\bf r^{(m,n)}}(P)$ for some $1 \leq i \leq n$.  This implies that  $r_i \in  {\bf r^{(m,n)}}(P)$ or    $g(r_1^{i-1},1,r_{i+1}^n) \in {\bf r^{(m,n)}}(P)$. In the first possibility, we are done. In the second possibility, we can
continue the process and obtain $r_j \in {\bf r^{(m,n)}}(P)$ for some $1 \leq j \leq i-1$ or $i+1 \leq j \leq n$. If $s=l(n-1)+1$,  then we are done  similarly.  Hence ${\bf r^{(m,n)}}(P)$ is an $n$-ary  prime hyperideal of $R$. Thus $P$ is an $n$-ary $q$-primary hyperideal of $R$.
\end{proof}
Now, we determine when an $n$-ary $q$-primary hyperideal of $R$ is  an $n$-ary $sq$-primary hyperideal of $R$.
\begin{theorem}
Let $P$ be a proper hyperideal of $R$ such that $g({\bf r^{(m,n)}}(P)^{(2)},1^{(n-2)}) \subseteq P$. If $P$ is an $n$-ary $q$-primary hyperideal of $R$, then $P$ is an $n$-ary $sq$-primary hyperideal of $R$.
\end{theorem}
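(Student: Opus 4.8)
The plan is to verify the defining condition of an $n$-ary $sq$-primary hyperideal directly, exploiting the fact that $q$-primariness makes ${\bf r^{(m,n)}}(P)$ an $n$-ary prime hyperideal. So I would begin by taking $r_1^n \in R$ with $g(r_1^n) \in P$. Since always $P \subseteq {\bf r^{(m,n)}}(P)$ (either $P$ lies in every prime hyperideal containing it, or ${\bf r^{(m,n)}}(P)=R$), this gives $g(r_1^n) \in {\bf r^{(m,n)}}(P)$.

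Next I would invoke the hypothesis that $P$ is $n$-ary $q$-primary, i.e.\ that ${\bf r^{(m,n)}}(P)$ is an $n$-ary prime hyperideal of $R$, and apply the element-wise characterization of prime hyperideals (Lemma 4.5 in \cite{sorc1}) to $g(r_1^n) \in {\bf r^{(m,n)}}(P)$. This yields an index $i$ with $1 \leq i \leq n$ for which $r_i \in {\bf r^{(m,n)}}(P)$.

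The decisive step is where the extra hypothesis $g({\bf r^{(m,n)}}(P)^{(2)},1^{(n-2)}) \subseteq P$ enters: for that same index $i$, both arguments equal to $r_i$ lie in ${\bf r^{(m,n)}}(P)$, so $g(r_i^{(2)},1^{(n-2)}) \in g({\bf r^{(m,n)}}(P)^{(2)},1^{(n-2)}) \subseteq P$, which is exactly the first alternative in the definition of an $n$-ary $sq$-primary hyperideal. Hence $P$ is $n$-ary $sq$-primary.

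I do not anticipate a genuine obstacle here; the argument is essentially bookkeeping once the two ingredients---primeness of ${\bf r^{(m,n)}}(P)$ and the ``squaring'' containment---are lined up. The only point that deserves a moment's care is that $g$ is an ordinary $n$-ary operation (not a hyperoperation), so $g(r_i^{(2)},1^{(n-2)})$ is a single element and its membership in $g({\bf r^{(m,n)}}(P)^{(2)},1^{(n-2)})$ follows immediately from $r_i \in {\bf r^{(m,n)}}(P)$; no hyperoperation subtleties intervene, and the case $n=2$ (empty string of $1$'s) is covered uniformly. It is also worth noting that, under this hypothesis, the second alternative in the $sq$-primary definition is never actually needed.
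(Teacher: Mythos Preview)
Your proposal is correct and follows essentially the same route as the paper's proof: both pass from $g(r_1^n)\in P\subseteq {\bf r^{(m,n)}}(P)$, use primeness of the radical to find $r_i\in {\bf r^{(m,n)}}(P)$, and then apply the squaring containment to get $g(r_i^{(2)},1^{(n-2)})\in P$. Your additional remarks (about $g$ being an operation, the $n=2$ case, and the second alternative being unnecessary) are sound but not needed for the argument.
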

\begin{proof}
Let $P$ be an $n$-ary $q$-primary hyperideal of $R$. Assume that $g(r_1^n) \in P$ for some $r_1^n \in R$. Since $P \subseteq {\bf r^{(m,n)}}(P)$ and $P$ is an $n$-ary $q$-primary hyperideal of $R$, we get $r_i \in {\bf r^{(m,n)}}(P)$ for some $1 \leq i \leq n$. Since $g({\bf r^{(m,n)}}(P)^{(2)},1^{(n-2)}) \subseteq P$, we have $g(r_i^{(2)},1^{(n-2)}) \in P$ which means $P$ is an $n$-ary $sq$-primary hyperideal of $R$.
\end{proof}
Recall from \cite{sorc1} that  the hyperideal generated by  an element $x$ in a Krasner $(m,n)$-hyperring $R$ is denoted by $<x>$ and is defined as $<x>=g(R,x,1^{(n-2)})=\{g(r,x,1^{(n-2)}) \ \vert \ r \in R\}.$
\begin{theorem}
Let $<r>$ be an $n$-ary $sq$-primary hyperideal of $R$ for all $r \in R$. Then every proper hyperideal of $R$ is an $n$-ary $sq$-primary hyperideal.
\end{theorem}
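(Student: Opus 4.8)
The plan is to reduce the $sq$-primary condition for an arbitrary proper hyperideal $P$ to the same condition for a principal hyperideal, where the hypothesis already supplies exactly what is needed. So I would start by taking any $r_1^n \in R$ with $g(r_1^n) \in P$, and setting $a = g(r_1^n)$.

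First I would record two easy facts about $<a>$. Since $1$ is a scalar identity of $R$, taking $r = 1$ in $<a> = g(R,a,1^{(n-2)})$ shows $a = g(1,a,1^{(n-2)}) \in <a>$; and since $a \in P$ and $P$ is a hyperideal, $<a> \subseteq P$. In particular $<a> \subseteq P \subsetneq R$, so $<a>$ is a \emph{proper} hyperideal of $R$, and therefore the hypothesis applies to it: $<a>$ is an $n$-ary $sq$-primary hyperideal.

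Next I would apply the $sq$-primary property of $<a>$ to the membership $g(r_1^n) = a \in <a>$. This yields an index $1 \leq i \leq n$ with $g(r_i^{(2)},1^{(n-2)}) \in <a>$ or $g(r_1^{i-1},1,r_{i+1}^n) \in {\bf r^{(m,n)}}(<a>)$. In the first case $g(r_i^{(2)},1^{(n-2)}) \in <a> \subseteq P$. In the second case I would invoke monotonicity of the radical: since $<a> \subseteq P$, every prime hyperideal containing $P$ contains $<a>$, so ${\bf r^{(m,n)}}(<a>) \subseteq {\bf r^{(m,n)}}(P)$, and hence $g(r_1^{i-1},1,r_{i+1}^n) \in {\bf r^{(m,n)}}(P)$. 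In either case the defining condition of an $sq$-primary hyperideal holds for $P$, which would finish the argument.

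I expect no genuine obstacle here; the proof is essentially bookkeeping. The only two points deserving a line of justification are that $<g(r_1^n)>$ is genuinely proper (so that the hypothesis is legitimately applicable to it), which follows from $<g(r_1^n)> \subseteq P \subsetneq R$, and that ${\bf r^{(m,n)}}(-)$ is order-preserving, which is immediate from its definition as the intersection of the prime hyperideals containing the given hyperideal.
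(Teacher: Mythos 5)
Your proof is correct and follows essentially the same route as the paper: pass to the principal hyperideal $<g(r_1^n)>$, apply the hypothesis there, and transfer the conclusion back to $P$ via $<g(r_1^n)> \subseteq P$ and monotonicity of ${\bf r^{(m,n)}}(-)$. Your added remarks (that $g(r_1^n) \in <g(r_1^n)>$ and that $<g(r_1^n)>$ is proper because it sits inside $P \subsetneq R$) are small justifications the paper leaves implicit, and they are fine.
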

\begin{proof}
Let $P$ be an arbitrary hyperideal of $R$. Assume that $g(r_1^n) \in P$ for some $r_1^n \in R$. Then we have $g(r_1^n) \in <g(r_1^n)> $. Since $<g(r_1^n)> $ is an $n$-ary $sq$-primary hyperideal of $R$, we obtain $g(r_i^{(2)},1^{(n-2)}) \in <g(r_1^n)> $ or $g(r_1^{i-1},1,r_{i+1}^n) \in {\bf r^{(m,n)}}(<g(r_1^n)>)$ for some $1 \leq i \leq n$. Since $<g(r_1^n)> \subseteq P$ and ${\bf r^{(m,n)}}(<g(r_1^n)>) \subseteq {\bf r^{(m,n)}}(P)$, we get $g(r_i^{(2)},1^{(n-2)}) \in P$ or $g(r_1^{i-1},1,r_{i+1}^n) \in {\bf r^{(m,n)}}(P)$. Thus $P$ is an $n$-ary $sq$-primary hyperideal of $R$. 
\end{proof}
\begin{theorem}
Let $P$ be a proper hyperideal of $R$. If $P$ is an $n$-ary $sq$-primary hyperideal of $R$, then $g(P_1^n) \subseteq P$ for some hyperideals $P_1^n$ of $R$ implies that $g(p^{(2)},1^{(n-2)}) \in P$ for all $p \in P_i$ or $g(P_1^{i-1},1,P_{i+1}^n) \subseteq {\bf r^{(m,n)}}(P)$ for some $1 \leq i \leq n$.
\end{theorem}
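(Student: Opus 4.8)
The plan is to reduce the statement to the primality of the radical and then invoke the ideal-theoretic form of primality recalled in the introduction. First I would apply Theorem \ref{ali}: since $P$ is an $n$-ary $sq$-primary hyperideal, it is $n$-ary $q$-primary, and hence $Q:={\bf r^{(m,n)}}(P)$ is an $n$-ary prime hyperideal of $R$. Now assume $g(P_1^n)\subseteq P$ for hyperideals $P_1^n$ of $R$. Since $P\subseteq Q$ we obtain $g(P_1^n)\subseteq Q$, and because $Q$ is prime, the hyperideal version of primality (``$g(A_1^n)\subseteq Q$ for hyperideals $A_1^n$ implies $A_1\subseteq Q$ or $\cdots$ or $A_n\subseteq Q$'') yields $P_{j_0}\subseteq Q$ for some $1\leq j_0\leq n$.

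Next I would fix an index $i$ with $i\neq j_0$ (possible since $n\geq 2$ for a Krasner $(m,n)$-hyperring) and show that the second alternative holds for this $i$, namely $g(P_1^{i-1},1,P_{i+1}^n)\subseteq Q$. An arbitrary element of $g(P_1^{i-1},1,P_{i+1}^n)$ has the form $g(a_1^{i-1},1,a_{i+1}^n)$ with $a_k\in P_k$ for every $k\neq i$; in particular $a_{j_0}\in P_{j_0}\subseteq Q$. Since $Q$ is a hyperideal, it absorbs any $g$-product having an entry in $Q$, so $g(a_1^{i-1},1,a_{i+1}^n)\in Q$. Hence $g(P_1^{i-1},1,P_{i+1}^n)\subseteq Q={\bf r^{(m,n)}}(P)$, which is exactly the asserted conclusion for this value of $i$.

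I do not expect a serious obstacle here; the only point needing care is the last step, where one must unwind the symbol $g(P_1^{i-1},1,P_{i+1}^n)$ as the set of $g$-products carrying a forced identity in slot $i$ and then correctly apply the absorbing property of the hyperideal $Q$ in the $j_0$-th slot. It is worth noting that along this route the first alternative, ``$g(p^{(2)},1^{(n-2)})\in P$ for all $p\in P_i$'', is never invoked: once the radical is prime the second alternative always holds. If a proof that formally engages both disjuncts is preferred, the same ingredients give a contradiction argument: if for every $i$ neither alternative held, then in particular $g(P_1^{i-1},1,P_{i+1}^n)\not\subseteq Q$ for all $i$, which by the absorbing property forces $P_j\not\subseteq Q$ for every $j$, contradicting $g(P_1^n)\subseteq Q$ together with the primality of $Q$.
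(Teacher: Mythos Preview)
Your argument is correct, but it proceeds quite differently from the paper. The paper does not invoke Theorem \ref{ali}; instead it first establishes an element-level dichotomy: for every $r\in R$, either $\langle r\rangle\subseteq P_r$ or $P_r\subseteq{\bf r^{(m,n)}}(P)$, where $P_r=\{a\in R: g(r,a,1^{(n-2)})\in P\}$, proved directly from the $sq$-primary definition. It then fixes an $i$ for which $g(P_1^{i-1},1,P_{i+1}^n)\nsubseteq{\bf r^{(m,n)}}(P)$, picks witnesses $p_j\in P_j$ ($j\neq i$) with $g(p_1^{i-1},1,p_{i+1}^n)\notin{\bf r^{(m,n)}}(P)$, and shows that for every $p\in P_i$ one has $g(p_1^{i-1},1,p_{i+1}^n)\in P_p\setminus{\bf r^{(m,n)}}(P)$, forcing $\langle p\rangle\subseteq P_p$ and hence $g(p^{(2)},1^{(n-2)})\in P$. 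Thus the paper actually proves the stronger fact that the disjunction holds for \emph{every} index $i$, and genuinely uses both alternatives. Your route, by contrast, passes through the primality of ${\bf r^{(m,n)}}(P)$ and the absorbing property of hyperideals, yielding a very short proof and the extra observation that the second alternative alone already suffices for some $i$; the price is reliance on Theorem \ref{ali}, while the paper's conductor-set lemma is self-contained and reappears (in a weak form) in the later treatment of $wsq$-primary hyperideals.
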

\begin{proof}
Assume that $P$ is an $n$-ary $sq$-primary hyperideal of $R$. First of all,  we prove that $ < r> \subseteq P_r$ or $P_r \subseteq {\bf r^{(m,n)}}(P)$ for all $r \in R$ where $P_r=\{a \in R \ \vert \ g(r,a,1^{(n-2)}) \in P\}$. To establish the claim, we pick an element $r \in R$. If $g(r^{(2)},1^{(n-2)}) \in P$, then we get $ < r> \subseteq P_r$. We assume that $g(r^{(2)},1^{(n-2)}) \notin P$. Take any $a \in P_r$. So we have $g(r,a,1^{(n-2)}) \in P$. Since $P$ is an $n$-ary $sq$-primary hyperideal of $R$ and $g(r^{(2)},1^{(n-2)}) \notin P$, we conclude that $a=g(a,1^{(n-1)}) \in {\bf r^{(m,n)}}(P)$, as needed. Now, we assume that $g(P_1^n) \subseteq P$ for some hyperideals $P_1^n$ of $R$ such that $g(P_1^{i-1},1,P_{i+1}^n) \nsubseteq {\bf r^{(m,n)}}(P)$ for some $1 \leq i \leq n$. Therefore   there exist $p_j \in P_j$ for each $j \in \{1,\cdots,n\}-\{i\}$ such that $g(p_1^{i-1},1,p_{i+1}^n) \nsubseteq {\bf r^{(m,n)}}(P)$. Hence for every $p \in P_i$, $g(p_1^{i-1},p,p_{i+1}^n) \in  {\bf r^{(m,n)}}(P)$. Since $g(p_1^{i-1},1,p_{i+1}^n) \in P_p-{\bf r^{(m,n)}}(P)$, then $P_r \nsubseteq {\bf r^{(m,n)}}(P)$ and so $ <p> \subseteq P_p$. This implies that $g(p^{(2)},1^{(n-2)}) \in P$.
\end{proof}
\begin{theorem}
Let $P$ be an $n$-ary $sq$-primary hyperideal of $R$. If $r \notin P$ and  $<r>=<g(r^{(2)},1^{(n-2)})>$, then $P_r=\{a \in R \ \vert \ g(r,a,1^{(n-2)}) \in P \}$  is  an $n$-ary $sq$-primary hyperideal of $R$.
\end{theorem}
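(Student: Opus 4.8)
The plan is to check the defining condition of an $n$-ary $sq$-primary hyperideal for $P_r$ directly, using throughout that ${\bf r^{(m,n)}}(P)$ is an $n$-ary \emph{prime} hyperideal (by Theorem \ref{ali} together with the definition of a $q$-primary hyperideal). First I would dispose of the routine points: $P_r=\{a\in R \mid g(r,a,1^{(n-2)})\in P\}$ is a hyperideal of $R$; it is proper, since $g(r,1^{(n-1)})=r\notin P$ forces $1\notin P_r$; and $P\subseteq P_r$ because $g(r,a,1^{(n-2)})\in P$ for every $a\in P$, whence ${\bf r^{(m,n)}}(P)\subseteq {\bf r^{(m,n)}}(P_r)$ by monotonicity of the radical.

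The one genuinely new ingredient is the claim that $r\notin {\bf r^{(m,n)}}(P)$, and this is exactly where the hypothesis $<r>=<g(r^{(2)},1^{(n-2)})>$ enters. I would first prove, by induction on $k$, that $<r>=<g(r^{(k)},1^{(n-k)})>$ for every $1\le k\le n$ (and $<r>=<g_{(l)}(r^{(k)})>$ when $k=l(n-1)+1>n$): the case $k=2$ is the hypothesis, and the inductive step is obtained by writing $r=g(c,g(r^{(2)},1^{(n-2)}),1^{(n-2)})$ for a suitable $c\in R$ (possible since $r\in <g(r^{(2)},1^{(n-2)})>$) and substituting this expression for one occurrence of $r$ in a $g$-power of $r$, then reassociating. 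Granting this, if $r$ belonged to ${\bf r^{(m,n)}}(P)$ there would be an $s$ with $g(r^{(s)},1^{(n-s)})\in P$ (or $g_{(l)}(r^{(s)})\in P$), and then $r\in <r>=<g(r^{(s)},1^{(n-s)})>\subseteq P$, contradicting $r\notin P$.

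For the main step, suppose $g(a_1^n)\in P_r$, i.e.\ $g(r,g(a_1^n),1^{(n-2)})\in P$. By associativity and commutativity this element equals $g(g(r,a_1,1^{(n-2)}),a_2,\ldots,a_n)$, so I apply the $sq$-primary property of $P$ to the $n$-tuple $\big(g(r,a_1,1^{(n-2)}),a_2,\ldots,a_n\big)$ and split into four cases according to the resulting index and alternative. (i) Index $j\ge 2$, square alternative: then $g(a_j^{(2)},1^{(n-2)})\in P\subseteq P_r$. (ii) Index $1$, radical alternative: then $g(1,a_2,\ldots,a_n)\in {\bf r^{(m,n)}}(P)\subseteq {\bf r^{(m,n)}}(P_r)$. (iii) Index $j\ge 2$, radical alternative: the element in question equals $g\big(r,g(a_1^{j-1},1,a_{j+1}^n),1^{(n-2)}\big)\in{\bf r^{(m,n)}}(P)$, so primeness together with $r\notin{\bf r^{(m,n)}}(P)$ gives $g(a_1^{j-1},1,a_{j+1}^n)\in{\bf r^{(m,n)}}(P)\subseteq {\bf r^{(m,n)}}(P_r)$. (iv) Index $1$, square alternative: then $g\big(g(r,a_1,1^{(n-2)})^{(2)},1^{(n-2)}\big)=g\big(g(r^{(2)},1^{(n-2)}),g(a_1^{(2)},1^{(n-2)}),1^{(n-2)}\big)\in{\bf r^{(m,n)}}(P)$; primeness and $r\notin{\bf r^{(m,n)}}(P)$ force $a_1\in{\bf r^{(m,n)}}(P)\subseteq{\bf r^{(m,n)}}(P_r)$, and then $g(a_1,1,a_3,\ldots,a_n)\in{\bf r^{(m,n)}}(P_r)$ because ${\bf r^{(m,n)}}(P_r)$ is a hyperideal. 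In each case the $sq$-primary condition for $P_r$ is met (with index $j$ in (i) and (iii), index $1$ in (ii), index $2$ in (iv)), so $P_r$ is $n$-ary $sq$-primary.

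The part I expect to require the most care is the induction showing $<r>=<g(r^{(k)},1^{(n-k)})>$ for all $k$: one must track the multiplicities of $1$ produced by the reassociations and pass to the derived operations $g_{(l)}$ once the products exceed length $n$. Once that lemma is in hand, the remainder is a clean consequence of the primeness of ${\bf r^{(m,n)}}(P)$.
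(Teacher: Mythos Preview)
Your argument is correct and handles all four subcases explicitly (more thoroughly, in fact, than the paper's terse treatment). The route, however, differs from the paper's. You invest the hypothesis $\langle r\rangle=\langle g(r^{(2)},1^{(n-2)})\rangle$ in an inductive lemma showing $r\notin{\bf r^{(m,n)}}(P)$, and then rely on the \emph{primeness} of ${\bf r^{(m,n)}}(P)$ (from Theorem~\ref{ali}) to settle cases (iii) and (iv). The paper never invokes primeness of the radical. Instead it exploits the hypothesis purely at the level of colon hyperideals: first, $r\notin P_r$ (else $g(r^{(2)},1^{(n-2)})\in P$ would force $\langle r\rangle\subseteq P$), so by the dichotomy established in the preceding theorem one gets $P_r\subseteq{\bf r^{(m,n)}}(P)$ and hence ${\bf r^{(m,n)}}(P_r)={\bf r^{(m,n)}}(P)$; second, the hypothesis immediately yields $P_r=P_{g(r^{(2)},1^{(n-2)})}$, so from $g\big(g(r,a_i,1^{(n-2)})^{(2)},1^{(n-2)}\big)=g\big(g(r^{(2)},1^{(n-2)}),g(a_i^{(2)},1^{(n-2)}),1^{(n-2)}\big)\in P$ one reads off $g(a_i^{(2)},1^{(n-2)})\in P_{g(r^{(2)},1^{(n-2)})}=P_r$ directly. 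Your approach is self-contained (no appeal to the earlier dichotomy) and makes the case split transparent; the paper's approach avoids your induction on powers of $r$ and, in your case (iv), delivers the sharper conclusion $g(a_1^{(2)},1^{(n-2)})\in P_r$ rather than merely $a_1\in{\bf r^{(m,n)}}(P_r)$.
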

\begin{proof}
Assume $P$ is an $n$-ary $sq$-primary hyperideal of $R$. Since $r \notin P_r$ , then $ < r> \nsubseteq P_r$ and so ${\bf r^{(m,n)}}(P)={\bf r^{(m,n)}}(P_r)$. Let $g(r_1^n) \in P_r$ for some $r_1^n \in R$ such that $g(r_1^{i-1},1,r_{i+1}^n) \notin {\bf r^{(m,n)}}(P_r)$. Therefore we have $g(g(r_1^n),r,1^{(n-2)})=g(r_1^{i-1},g(r,r_i,1^{(n-2)}),r_{i+1}^n) \in P$ and $g(r_1^{i-1},1,r_{i+1}^n) \notin {\bf r^{(m,n)}}(P)$. Then we conclude that $g(g(r,r_i,1^{(n-2)})^{(2)},1^{(n-2)})=g(g(r^{(2)},1^{(n-2)}),g(r_i^{(2)},1^{(n-2)}),1^{(n-2)}) \in P$ as $P$ is an $n$-ary $sq$-primary hyperideal of $R$. This implies that $g(r_i^{(2)},1^{(n-2)}) \in P_{g(r^{(2)},1^{(n-2)})}=P_r$ which means $P_r$ is  an $n$-ary $sq$-primary hyperideal of $R$.
\end{proof}
\begin{theorem} \label{ali2}
Let  $(R_t, f_t, g_t)^,$s be  Krasner $(m,n)$-hyperrings with scalar identitis  $1_{R_t}$ for $t=1,2$ and  $P_t^,$s are hyperideals of $R_t$. Then the followings are equivalent:
\begin{itemize}
\item[\rm(1)]~$P_1 \times P_2$ is an $n$-ary $sq$-primary hyperideal of $R_1 \times R_2$.
 \item[\rm(2)]~$P_1$ is an  $n$-ary $sq$-primary hyperideal   of $R_1$ and $P_2=R_2$ or $P_2$ is an  $n$-ary $sq$-primary hyperideal of  $R_2$ and $P_1=R_1$.
 \end{itemize}
\end{theorem}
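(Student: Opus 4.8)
The plan is to prove the equivalence by establishing each implication separately, leaning heavily on Theorem~\ref{ali} (an $sq$-primary hyperideal is $q$-primary) and on the structure theorem for $q$-primary hyperideals of a product that was proved earlier. First I would handle the easy direction $(2)\Rightarrow(1)$: assume, say, $P_2=R_2$ and $P_1$ is an $n$-ary $sq$-primary hyperideal of $R_1$. Take $\bigl((a_1,b_1),\dots,(a_n,b_n)\bigr)$ with $(g_1\times g_2)$-product lying in $P_1\times R_2$; the second coordinate is automatic, and the first coordinate gives $g_1(a_1^n)\in P_1$, so $sq$-primariness of $P_1$ furnishes an index $i$ with $g_1(a_i^{(2)},1^{(n-2)})\in P_1$ or $g_1(a_1^{i-1},1,a_{i+1}^n)\in {\bf r^{(m,n)}}(P_1)$. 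Pairing with the trivial facts $b\in R_2$ and ${\bf r^{(m,n)}}(R_2)=R_2$, and noting ${\bf r^{(m,n)}}(P_1\times R_2)={\bf r^{(m,n)}}(P_1)\times R_2$, yields the required conclusion for $P_1\times P_2$.

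For the harder direction $(1)\Rightarrow(2)$, I would argue as follows. Suppose $P_1\times P_2$ is $n$-ary $sq$-primary in $R_1\times R_2$. By Theorem~\ref{ali} it is $n$-ary $q$-primary, so by Theorem~2.3 (the product characterization of $q$-primary hyperideals) we may assume without loss of generality that $P_2=R_2$ and that $P_1$ is $q$-primary in $R_1$; it remains to upgrade $P_1$ to $sq$-primary. To that end, take $r_1^n\in R_1$ with $g_1(r_1^n)\in P_1$. Then $\bigl((r_1,1_{R_2}),\dots,(r_n,1_{R_2})\bigr)$ has $(g_1\times g_2)$-product $\bigl(g_1(r_1^n),1_{R_2}\bigr)\in P_1\times R_2$, so $sq$-primariness of $P_1\times R_2$ gives some $1\le i\le n$ with $(g_1\times g_2)\bigl((r_i,1_{R_2})^{(2)},(1_{R_1},1_{R_2})^{(n-2)}\bigr)\in P_1\times R_2$ or $(g_1\times g_2)\bigl((r_1,1_{R_2})^{i-1},(1_{R_1},1_{R_2}),(r_{i+1},1_{R_2})^{n}\bigr)\in {\bf r^{(m,n)}}(P_1\times R_2)$. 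Reading off first coordinates, and using ${\bf r^{(m,n)}}(P_1\times R_2)={\bf r^{(m,n)}}(P_1)\times R_2$, this says exactly $g_1(r_i^{(2)},1^{(n-2)})\in P_1$ or $g_1(r_1^{i-1},1,r_{i+1}^n)\in {\bf r^{(m,n)}}(P_1)$, so $P_1$ is $sq$-primary.

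The main obstacle I anticipate is purely bookkeeping rather than conceptual: one must be careful that $P_1\times P_2$ being a \emph{proper} hyperideal forces exactly one of the factors to be the whole ring once $sq$-primariness (hence $q$-primariness, hence primeness of the radical) is in play — the factor that is not the whole ring cannot itself be improper, and it cannot have the other factor strictly between it and the ambient ring without breaking primeness of ${\bf r^{(m,n)}}(P_1)\times {\bf r^{(m,n)}}(P_2)$. This is precisely the content already extracted in the proof of Theorem~2.3, so I would simply cite it rather than redo it. A secondary subtlety is making sure the componentwise description of ${\bf r^{(m,n)}}$ on a product, ${\bf r^{(m,n)}}(P_1\times P_2)={\bf r^{(m,n)}}(P_1)\times{\bf r^{(m,n)}}(P_2)$, is invoked cleanly; this was used without comment in the proof of Theorem~2.3 and follows from the elementwise radical criterion quoted in the introduction applied coordinatewise. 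With those two facts in hand the proof is a short transfer of the defining condition back and forth between $R_1$ and $R_1\times R_2$ via the embedding $r\mapsto(r,1_{R_2})$ and the projection onto the first coordinate.
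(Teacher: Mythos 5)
Your proposal is correct and follows essentially the same route as the paper: direction (2)$\Rightarrow$(1) is the identical componentwise transfer, and for (1)$\Rightarrow$(2) the paper likewise invokes Theorem~\ref{ali} to force one factor to be the whole hyperring and then checks the $sq$-primary condition on the other factor via the embedding $r\mapsto(r,1)$. Your explicit citation of the earlier product characterization of $q$-primary hyperideals merely packages a step the paper carries out inline, so there is no substantive difference.
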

\begin{proof}
(1) $\Longrightarrow$ (2) Let $P_1 \times P_2$ be an $n$-ary $sq$-primary hyperideal of $R_1 \times R_2$. Then we have ${\bf r^{(m,n)}}(P_1 \times P_2)={\bf r^{(m,n)}}(P_1) \times {\bf r^{(m,n)}}(P_2)$ is an $n$-ary prime hyperideal of $R_1 \times R_2$ by Theorem \ref{ali} which means $P_1=R_1$ or $P_2=R_1$. We may assume that $P_1=R_1$. Suppose that $g_2(r_1^n) \in P_2$ for some $r_1^n \in R_2$. Therefore we have $g_1 \times g_2((1_{R_1},r_1),\cdots,(1_{R_1},r_n))=(1_{R_1},g_2(r_1^n)) \in P_1 \times P_2$. Since $P_1 \times P_2$ is an $n$-ary $sq$-primary hyperideal of $R_1 \times R_2$, we conclude that $g_1 \times g_2((1_{R_1},r_i)^{(2)},(1_{R_1},1_{R_2})^{(n-2)})=(1_{R_1},g_2(r_i^{(2)},1_{R_2}^{(n-2)})) \in P_1 \times P_2$ or 

$\hspace{1cm}g_1 \times g_2((1_{R_1},r_1),\cdots,(1_{R_1},r_{i-1}),(1_{R_1},1_{R_2}),(1_{R_1},r_{i+1}), \cdots,(1_{R_1},r_n))$   

$\hspace{2cm}=(1_{R_1},g_2(r_1^{i-1},1_{R_2},r_{i+1}^n)) $  

$\hspace{2cm}\in {\bf r^{(m,n)}}(P_1 \times P_2)$\\ for some $1 \leq i \leq n$. This means that $g_2(r_i^{(2)},1_{R_2}^{(n-2)}) \in P_2$ or $g_2(r_1^{i-1},1_{R_2},r_{i+1}^n) \in {\bf r^{(m,n)}}(P_2)$. This shows that $P_2$ is an  $n$-ary $sq$-primary hyperideal of  $R_2$.

(2) $\Longrightarrow$ (1) Let $P_1$ be an  $n$-ary $sq$-primary hyperideal of  $R_1$ and $P_2=R_2$. Assume that $g_1 \times g_2((r_1,s_1),\cdots,(r_n,s_n))=(g_1(r_1^n),g_2(s_1^n)) \in P_1 \times P_2$ for some $r_1^n \in R_1$ and $s_1^n \in R_2$. Hence we have $g_1(r_1^n) \in P_1$. Since $P_1$ is an  $n$-ary $sq$-primary hyperideal of  $R_1$, then $g_1(r_i^{(2)},1_{R_1}^{(n-2)}) \in P_1$ or $g_1(r_1^{i-1},1_{R_1},r_{i+1}^n) \in {\bf r^{(m,n)}}(P_1)$ for some $1 \leq i \leq n$. This means that $g_1 \times g_2((r_i,s_i)^{(2)},(1_{R_1},1_{R_2})^{(n-2)})=(g_1(r_i^{(2)},1_{R_1}^{(n-2)}),g_2(s_i^{(2)},1_{R_2}^{(n-2)})) \in P_1 \times P_2$ or 

$\hspace{1cm}g_1 \times g_2((r_1,s_1),\cdots,(r_{i-1},s_{i-1}),(1_{R_1},1_{R_2}),(r_{i+1},s_{i+1}),\cdots,(r_n,s_n))$

$\hspace{2cm}=(g_1(r_1^{i-1},1_{R_1},r_{i+1}^n),g_1(s_1^{i-1},1_{R_2},s_{i+1}^n))$

$\hspace{2cm} \in {\bf r^{(m,n)}}(P_1\times P_2)$.\\
Similiar for the other case. Thus $P_1 \times P_2$ is an $n$-ary $sq$-primary hyperideal of $R_1 \times R_2$.
\end{proof}
\section{$n$-ary $wsq$-primary hyperideals}
This section  is devoted for studing the notion of $n$-ary weakly strongly quasi-primary hyperideals.
\begin{definition}
Let $P$ be a proper hyperideal of $R$. We call $P$ an $n$-ary weakly strongly quasi-primary hyperideal  of $R$ if  $0 \neq g(r_1^n) \in P$ for each $r_1^n \in R$ implies  $g(r_i^{(2)},1^{(n-2)}) \in P$ or $ g(r_1^{i-1},1,r_{i+1}^n) \in {\bf r^{(m,n)}}(P)$ for some $1 \leq i \leq n$. "weakly strongly quasi-primary" is denoted by "$wsq$-primary", shortly.
\end{definition}
\begin{example} 
Every $n$-ary $sq$-primary hyperideal of $R$ is an $n$-ary $wsq$-primary hyperideal. 
\end{example}
\begin{example} \label{classes}
If we continue with Example \ref{classes}, then $I=\{\bar{0}, \bar{3}, \bar{6}\}$ is a  $wsq$-primary hyperideal  of $G$.
\end{example}

\begin{theorem}\label{radic}
 Assume that $P$ is an $n$-ary $wsq$-primary hyperideal of $R$. If $P$ is not $sq$-primary, then $g(P^{(2)},1^{(n-2)})=<0>$.
 \end{theorem}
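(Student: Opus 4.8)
The plan is to argue by contradiction, imitating the commutative-ring argument of Ugurlu et al.\ \cite{Ugurlu} but carrying it out with hyperaddition and reversibility. Suppose $P$ is $wsq$-primary, is \emph{not} $sq$-primary, and yet $g(P^{(2)},1^{(n-2)})\neq <0>=\{0\}$, so there are $a,b\in P$ with $g(a,b,1^{(n-2)})\neq 0$. Since $P$ is not $sq$-primary, fix a tuple $r_1^n\in R$ with $g(r_1^n)\in P$ for which \emph{every} index fails the $sq$-condition, i.e.\ $g(r_i^{(2)},1^{(n-2)})\notin P$ and $g(r_1^{i-1},1,r_{i+1}^n)\notin {\bf r^{(m,n)}}(P)$ for all $1\le i\le n$. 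Because $P$ is $wsq$-primary, this forces $g(r_1^n)=0$. Put $x:=r_1$ and $y:=g(1,r_2^n)$; by generalized associativity of $g$ and the scalar identity $1$, $g(x,y,1^{(n-2)})=g(r_1^n)=0$. The choice of $r_1^n$ gives $g(x^{(2)},1^{(n-2)})\notin P$ and $y\notin {\bf r^{(m,n)}}(P)$ (index $i=1$); moreover $x\notin {\bf r^{(m,n)}}(P)$, since otherwise $g(r_1,1,r_3^n)\in {\bf r^{(m,n)}}(P)$ (the radical is a hyperideal), contradicting index $i=2$ — for $n=2$ this last point is immediate as $g(r_1,1)=r_1$. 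Finally $g(y^{(2)},1^{(n-2)})\notin P$ as well, for otherwise $y\in {\bf r^{(m,n)}}(P)$.

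The heart of the argument is two symmetric claims. Step~1: $g(x,p,1^{(n-2)})=0$ for every $p\in P$. If not, pick such a $p$ and an element $z\in f(y,p,0^{(m-2)})$; distributivity of $g$ over $f$, $g(x,y,1^{(n-2)})=0$, the absorbing property of $0$ and the fact that $0$ is neutral for $(R,f)$ give $g(x,z,1^{(n-2)})=g(x,p,1^{(n-2)})$, a nonzero element of $P$. Apply the $wsq$-primary condition to the $n$-tuple $(x,z,1^{(n-2)})$: since $g(x^{(2)},1^{(n-2)})\notin P$ and $x\notin {\bf r^{(m,n)}}(P)$, the surviving alternatives are $z\in {\bf r^{(m,n)}}(P)$ or $g(z^{(2)},1^{(n-2)})\in P$. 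In the first case, reversibility of the canonical $m$-ary hypergroup gives $y\in f(z,-p,0^{(m-2)})\subseteq {\bf r^{(m,n)}}(P)$, contradicting $y\notin {\bf r^{(m,n)}}(P)$. In the second case, expanding $z\in f(y,p,0^{(m-2)})$ in both arguments of $g(z^{(2)},1^{(n-2)})$ and absorbing into $P$ every resulting term containing $p$, one gets $g(z^{(2)},1^{(n-2)})\in f(g(y^{(2)},1^{(n-2)}),c,0^{(m-2)})$ for some $c\in P$; reversibility then yields $g(y^{(2)},1^{(n-2)})\in P$, again impossible. Step~2 is the mirror statement $g(y,p,1^{(n-2)})=0$ for all $p\in P$, proved identically with $z\in f(x,p,0^{(m-2)})$ and using $x\notin {\bf r^{(m,n)}}(P)$, $g(x^{(2)},1^{(n-2)})\notin P$.

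With Steps~1 and~2 in hand, conclude as follows. Choose $z_1\in f(x,a,0^{(m-2)})$ and $z_2\in f(y,b,0^{(m-2)})$. Expanding in both arguments of $g(z_1,z_2,1^{(n-2)})$ and using $g(x,y,1^{(n-2)})=0$ together with $g(x,b,1^{(n-2)})=0$ (Step~1, $b\in P$) and $g(y,a,1^{(n-2)})=0$ (Step~2, $a\in P$) kills the cross terms, leaving $g(z_1,z_2,1^{(n-2)})=g(a,b,1^{(n-2)})\neq 0$, which lies in $P$. Applying the $wsq$-primary condition to $(z_1,z_2,1^{(n-2)})$: the alternatives $z_1\in {\bf r^{(m,n)}}(P)$ and $z_2\in {\bf r^{(m,n)}}(P)$ are excluded by reversibility (as $x,y\notin {\bf r^{(m,n)}}(P)$); the alternative $g(z_1^{(2)},1^{(n-2)})\in P$ gives, via Step~1 and reversibility, $g(x^{(2)},1^{(n-2)})\in P$, impossible; and $g(z_2^{(2)},1^{(n-2)})\in P$ gives, via Step~2 and reversibility, $g(y^{(2)},1^{(n-2)})\in P$, also impossible. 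This contradiction shows $g(P^{(2)},1^{(n-2)})=\{0\}=<0>$.

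The delicate point I expect to be the main obstacle is the second alternative of Step~1 (the case $g(z^{(2)},1^{(n-2)})\in P$), which has to be closed off \emph{before} Step~2 is available; what makes it work is the observation that $g(y^{(2)},1^{(n-2)})\notin P$ is free from $y\notin {\bf r^{(m,n)}}(P)$. Everything else is careful bookkeeping: treating each ``sum'' $f(\cdot,\cdot,0^{(m-2)})$ as a set from which a representative is chosen, propagating those choices through the distributive law, and replacing every subtraction of the classical ring proof by an application of reversibility in the canonical $m$-ary hypergroup, using throughout that ${\bf r^{(m,n)}}(P)$ is a hyperideal and that $g(w^{(2)},1^{(n-2)})\in P$ implies $w\in {\bf r^{(m,n)}}(P)$.
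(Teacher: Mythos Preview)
Your proof is correct and follows the same three-case perturbation argument as the paper: the paper's cases (perturb the second factor by some $p\in P$; perturb the first factor; perturb both using $a,b\in P$ with $g(a,b,1^{(n-2)})\neq 0$) correspond exactly to your Step~1, Step~2, and final contradiction. Your version is more thorough in that you first derive the auxiliary facts $x\notin{\bf r^{(m,n)}}(P)$ and $g(y^{(2)},1^{(n-2)})\notin P$ from the failure of $sq$-primariness at \emph{every} index, and then use them to rule out all four alternatives of the $wsq$-condition at each application, whereas the paper's argument records only the single alternative it needs in each case.
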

 \begin{proof}
  Let $g(P^{(2)},1^{(n-2)}) \neq <0>$. Suppose that $g(r_1^n) \in P$ for some $r_1^n \in R$ such that $g(r_i^{(2)},1^{(n-2)}) \notin P$. If $0 \neq g(r_1^n)$, then we have $g(r_1^{i-1},1,r_{i+1}^n) \in {\bf r^{(m,n)}}(P)$ as $P$ is an $n$-ary $wsq$-primary hyperideal of $R$. We assume that $ g(r_1^n)=0$. If $g(r_i,P,1^{(n-2)}) \neq <0>$, then we get $g(r_i,p,1^{(n-2)}) \neq 0$ for some $p \in P$ which implies 
  
  $\hspace{2cm}0 \neq g(r_i,f(g(r_1^{i-1},1,r_{i+1}^n),p,0^{(m-2)}),1^{(n-2)})$
  
  $\hspace{2.3cm}=f(g(r_1^n),g(r_i,p,1^{(n-2)}),0^{(m-2)})$
  
  $\hspace{2.3cm} \subseteq P$.\\ Since $P$ is an $n$-ary $wsq$-primary hyperideal of $R$ and $g(r_i^{(2)},1^{(n-2)}) \notin P$, we obtain $f(g(r_1^{i-1},1,r_{i+1}^n),p,0^{(m-2)}) \subseteq {\bf r^{(m,n)}}(P)$ and so $g(r_1^{i-1},1,r_{i+1}^n) \in {\bf r^{(m,n)}}(P)$. Let us assume $g(g(r_1^{i-1},1,r_{i+1}^n),P,1^{(n-2)}) \neq <0>$. Then $g(g(r_1^{i-1},1,r_{i+1}^n),p^{\prime},1^{(n-2)}) \neq 0$ for some $p^{\prime} \in P$. Therefore $0 \neq g(f(r_i,p^{\prime},0^{(m-2)}),g(r_1^{i-1},1,r_{i+1}^n),1^{(n-2)})
  =f(g(r_1^n),g(r_1^{i-1},p^{\prime},r_{i+1}^n),1^{(n-2)}),0^{(m-2)}) \subseteq P$. Since $P$ is an $n$-ary $wsq$-primary hyperideal of $R$ and $g(f(r_i,p^{\prime},0^{(m-2)})^{(2)},1^{(n-2)}) \nsubseteq P$, then $g(r_1^{i-1},1,r_{i+1}^n) \in {\bf r^{(m,n)}}(P)$. Now we assume that $g(r_i,P,1^{(n-2)})=g(g(r_1^{i-1},1,r_{i+1}^n),P,1^{(n-2)})=<0>$. From $g(P^{(2)},1^{(n-2)}) \neq <0>$, it follows that $g(a,b,1^{(n-2)}) \neq 0$ for some $a,b \in P$. So 
  
  $\hspace{1.3cm}0 \neq g(f(r_i,a,0^{(m-2)}),f(g(r_1^{i-1},1,r_{i+1}^n),b,0^{(m-2)}),1^{(n-2)})$
  
  $\hspace{1.6cm}=
  f(g(r_1^n),g(r_i,b,1^{(n-2)}),g(r_1^{i-1},a,r_{i+1}^n),g(a,b,1^{(n-2)}),0^{(m-4)})$

 $\hspace{1.6cm} \subseteq P$.\\ 
 If $g(f(r_i,a,0^{(m-2)})^{(2)},1^{(n-2)}) \subseteq P$, then we obtain
 
  $\hspace{1cm}f(g(r_i^{(2)},1^{(n-2)}),g(r_i,a,1^{(n-2)})^{(2)},g(a^{(2)},1^{(n-2)}),0^{(m-2)})\subseteq P$\\ which means $g(r_i^{(2)},1^{(n-2)}) \in P$, a contradiction, so $g(f(r_i,a,0^{(m-2)})^{(2)},1^{(n-2)}) \nsubseteq P$.
 Since $P$ is $n$-ary $wsq$-primary and $g(f(r_i,a,0^{(m-2)})^{(2)},1^{(n-2)}) \nsubseteq P$, we have $f(g(r_1^{i-1},1,r_{i+1}^n),b,0^{(m-2)}) \subseteq {\bf r^{(m,n)}}(P)$ which means $g(r_1^{i-1},1,r_{i+1}^n) \in {\bf r^{(m,n)}}(P)$. Hence $P$ is an $n$-ary $sq$-primary hyperideal of $R$ which is a contradiction. Thus  $g(P^{(2)},1^{(n-2)})=<0>$.
\end{proof}
As a consequence of the previous theorem we give the following explicit
result:
\begin{corollary} \label{radi}
Suppose that $P$ is an $n$-ary $wsq$-primary hyperideal of $R$ such that is not $sq$-primary. Then ${\bf r^{(m,n)}}(P)={\bf r^{(m,n)}}(0)$. 
\end{corollary}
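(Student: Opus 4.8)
The plan is to deduce this corollary directly from Theorem \ref{radic}. Since $P$ is $wsq$-primary but not $sq$-primary, Theorem \ref{radic} gives $g(P^{(2)},1^{(n-2)})=<0>$. The target is the equality ${\bf r^{(m,n)}}(P)={\bf r^{(m,n)}}(0)$, and since $<0>\subseteq P$ trivially yields ${\bf r^{(m,n)}}(0)\subseteq {\bf r^{(m,n)}}(P)$ by monotonicity of the radical, the whole content is the reverse inclusion ${\bf r^{(m,n)}}(P)\subseteq {\bf r^{(m,n)}}(0)$.

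First I would take an arbitrary $a\in P$ and show $a\in {\bf r^{(m,n)}}(0)$; this already suffices because $P\subseteq {\bf r^{(m,n)}}(0)$ implies ${\bf r^{(m,n)}}(P)\subseteq {\bf r^{(m,n)}}({\bf r^{(m,n)}}(0))={\bf r^{(m,n)}}(0)$ (the radical is idempotent, being an intersection of prime hyperideals). For $a\in P$, the hypothesis $g(P^{(2)},1^{(n-2)})=<0>$ gives in particular $g(a^{(2)},1^{(n-2)})=0$, hence $g(a^{(2)},1^{(n-2)})\in <0>$. By the description of the radical recalled in the Introduction (if $g(a^{(s)},1_R^{(n-s)})\in I$ for some $s\le n$, then $a\in {\bf r^{(m,n)}}(I)$), taking $s=2\le n$ and $I=<0>$ we conclude $a\in {\bf r^{(m,n)}}(0)$. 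Thus $P\subseteq {\bf r^{(m,n)}}(0)$, and the reverse inclusion follows as above.

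The only mild subtlety I anticipate is the edge case $n=2$, where ``$1^{(n-2)}$'' is the empty string and $g(a^{(2)},1^{(n-2)})$ is simply $g(a,a)$; the argument is unaffected since $s=2=n$ is still admissible in the radical criterion. I do not expect any real obstacle here — the corollary is essentially a one-line consequence of Theorem \ref{radic} together with the standard facts that the radical is monotone and idempotent, so the proof will be short.

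\begin{proof}
Since $P$ is an $n$-ary $wsq$-primary hyperideal of $R$ that is not $sq$-primary, Theorem \ref{radic} yields $g(P^{(2)},1^{(n-2)})=<0>$. In particular, for every $a \in P$ we have $g(a^{(2)},1^{(n-2)})=0 \in <0>$, so $a \in {\bf r^{(m,n)}}(0)$ by the characterization of the radical (with $s=2 \leq n$). Hence $P \subseteq {\bf r^{(m,n)}}(0)$, and therefore ${\bf r^{(m,n)}}(P) \subseteq {\bf r^{(m,n)}}({\bf r^{(m,n)}}(0))={\bf r^{(m,n)}}(0)$. Conversely, $<0> \subseteq P$ gives ${\bf r^{(m,n)}}(0) \subseteq {\bf r^{(m,n)}}(P)$. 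Thus ${\bf r^{(m,n)}}(P)={\bf r^{(m,n)}}(0)$.
\end{proof}
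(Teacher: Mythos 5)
Your proof is correct and is essentially the argument the paper intends: the corollary is stated without a separate proof as an immediate consequence of Theorem \ref{radic}, and your filling-in (from $g(P^{(2)},1^{(n-2)})=\langle 0\rangle$ deduce $P\subseteq {\bf r^{(m,n)}}(0)$, then use monotonicity and idempotence of the radical, with the reverse inclusion trivial) is the natural route. One minor remark: the Introduction states only the implication ``$a\in {\bf r^{(m,n)}}(I)$ implies some power lies in $I$,'' but the converse you invoke is the standard characterization from \cite{sorc1} (and is used repeatedly in the paper, e.g.\ in the proof of Theorem \ref{ali}), so this is not a genuine gap.
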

The intersection of $n$-ary $wsq$-primary hyperideals is discussed in the next theorem.
\begin{theorem}
Let $\{P_i\}_{i \in I}$ be a family of $n$-ary $wsq$-primary hyperideals of $R$ such that are not $sq$-primary. Then $P=\cap_{i \in I }P_i$ is an $n$-ary $wsq$-primary hyperideal of $R$. 
\end{theorem}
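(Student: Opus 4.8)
The plan is to align all the relevant radicals first and then run a short counting argument over the index set $I$; essentially no heavy hyperring computation is needed once that alignment is in place.

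The starting point: since each $P_i$ is $wsq$-primary but not $sq$-primary, Corollary~\ref{radi} gives ${\bf r^{(m,n)}}(P_i)={\bf r^{(m,n)}}(0)$ for every $i\in I$. Assuming $I\neq\emptyset$ (otherwise $P=R$ is not proper), I would fix some $i_0\in I$; from $<0>\subseteq P\subseteq P_{i_0}$ and monotonicity of the radical one gets ${\bf r^{(m,n)}}(0)\subseteq{\bf r^{(m,n)}}(P)\subseteq{\bf r^{(m,n)}}(P_{i_0})={\bf r^{(m,n)}}(0)$, hence ${\bf r^{(m,n)}}(P)={\bf r^{(m,n)}}(0)$ as well. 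This is the only point at which the hypothesis ``not $sq$-primary'' enters. I would also record the elementary fact $(\star)$: for any hyperideal $Q$ of $R$, if $g(a^{(2)},1^{(n-2)})\in Q$ then $a\in{\bf r^{(m,n)}}(Q)$. Indeed, for any prime hyperideal $P'\supseteq Q$ we have $g(a,a,1^{(n-2)})\in P'$, hence $a\in P'$ by Lemma~4.5 of \cite{sorc1} applied to the $n$-tuple $(a,a,1,\dots,1)$ (since $1\notin P'$); intersecting over all such $P'$ yields $a\in{\bf r^{(m,n)}}(Q)$. (This is also the radical criterion recalled in the introduction, taken with $s=2$.)

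Then I would verify the definition of $wsq$-primary for $P$ directly, by contradiction. Suppose $0\neq g(r_1^n)\in P$ while, for every $1\le t\le n$, both $g(r_t^{(2)},1^{(n-2)})\notin P$ and $g(r_1^{t-1},1,r_{t+1}^n)\notin{\bf r^{(m,n)}}(P)={\bf r^{(m,n)}}(0)$. Fix $i\in I$: since $0\neq g(r_1^n)\in P_i$ and $P_i$ is $wsq$-primary, there is some $t$ with $g(r_t^{(2)},1^{(n-2)})\in P_i$ or $g(r_1^{t-1},1,r_{t+1}^n)\in{\bf r^{(m,n)}}(P_i)={\bf r^{(m,n)}}(0)$; the second alternative is ruled out by our assumption, so some index $t_i$ satisfies $g(r_{t_i}^{(2)},1^{(n-2)})\in P_i$, whence $r_{t_i}\in{\bf r^{(m,n)}}(P_i)={\bf r^{(m,n)}}(0)$ by $(\star)$. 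Therefore $S=\{\,t:\ r_t\in{\bf r^{(m,n)}}(0)\,\}$ is nonempty. If $|S|\ge 2$, pick distinct $t,t'\in S$; then $r_{t'}$ occurs among the arguments of $g(r_1^{t-1},1,r_{t+1}^n)$ and lies in the hyperideal ${\bf r^{(m,n)}}(0)$, so $g(r_1^{t-1},1,r_{t+1}^n)\in{\bf r^{(m,n)}}(0)$, contradicting our assumption. Hence $S$ must be a singleton $\{t_0\}$, and then $t_i=t_0$ for every $i$, so $g(r_{t_0}^{(2)},1^{(n-2)})\in P_i$ for all $i$, i.e.\ $g(r_{t_0}^{(2)},1^{(n-2)})\in\bigcap_{i\in I}P_i=P$ --- again a contradiction. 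So $P$ is $wsq$-primary.

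I do not anticipate a genuine obstacle: the whole substance of the theorem has been pushed into Corollary~\ref{radi} (and behind it Theorem~\ref{radic}), which collapses every ${\bf r^{(m,n)}}(P_i)$ onto the single ideal ${\bf r^{(m,n)}}(0)$ and thereby makes the ``residual-product'' alternative in the definition independent of $i$. The only place I would treat with a little care is the case $|S|\ge 2$: that $g(r_1^{t-1},1,r_{t+1}^n)$ really does have $r_{t'}$ as one of its factors whenever $t'\neq t$ (clear from that expression), and that a hyperideal absorbs any product one of whose entries lies in it.
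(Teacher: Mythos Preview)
Your proof is correct and follows the same strategy as the paper: invoke Corollary~\ref{radi} to force ${\bf r^{(m,n)}}(P_i)={\bf r^{(m,n)}}(P)={\bf r^{(m,n)}}(0)$, then check the $wsq$-primary condition directly. Your extra step --- using $(\star)$ and the analysis of $|S|$ to pin down a single position $t_0$ that works across the whole family --- actually fills a gap the paper glosses over: the paper writes ``$g(r_i^{(2)},1^{(n-2)}) \in P_i$ for all $i\in I$'' with the position index and the family index conflated, and never explains why the square-alternative should occur at the same position for every $P_i$.
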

\begin{proof}
Since $\{P_i\}_{i \in I}$ are a family of $n$-ary $wsq$-primary hyperideals of $R$ such that are not $sq$-primary, we conclude that  ${\bf r^{(m,n)}}(P)={\bf r^{(m,n)}}(\cap_{i \in I }P_i)=\cap_{i \in I }{\bf r^{(m,n)}}(P_i)={\bf r^{(m,n)}}(0)$,  by Corollary \ref{radi}. Now, assume that $0 \neq g(r_1^n) \in P$ for some $r_1^n \in R$ but $g(r_1^{i-1},1,r_{i+1}^n) \notin {\bf r^{(m,n)}}(P)$. Therefore we have $0 \neq g(r_1^n) \in P_i$ and $g(r_1^{i-1},1,r_{i+1}^n) \notin {\bf r^{(m,n)}}(P_i)={\bf r^{(m,n)}}(0)$ for every $i \in I$. Since $P_i$ is $n$-ary $wsq$-primary, we have $g(r_i^{(2)},1^{(n-2)}) \in P_i$ for all $i \in I$ which implies $g(r_i^{(2)},1^{(n-2)}) \in P$. Consequently, $P$ is an $n$-ary $wsq$-primary hyperideal of $R$. 
\end{proof}
A proper hyperideal $P$ of $R$ is called  $n$-ary weakly primary  provided that for $r_1^n \in R$, $0 \neq g(r_1^n) \in P$ implies $r_i \in P$ or $g(r_1^{i-1},1_R,r_{i+1}^n) \in {\bf r^{(m,n)}}(P)$ for some $1 \leq i \leq n$.
\begin{theorem}
Let $P$ and $Q$ be proper hyperideals of $R$ such that $P \subseteq Q$. If $P$ is an $n$-ary weakly primary hyperideal of $R$, then $g(P,Q,1^{(n-2)})$ is $n$-ary $wsq$-primary hyperideal of $R$.
\end{theorem}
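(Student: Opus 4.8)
The plan is to write $I := g(P,Q,1^{(n-2)})$ and to begin by recording two preliminary facts: that $I$ is a proper hyperideal, and that ${\bf r^{(m,n)}}(I) = {\bf r^{(m,n)}}(P)$. Since $Q \subseteq R$ and $P$ is a hyperideal, $I = g(P,Q,1^{(n-2)}) \subseteq g(P,R,1^{(n-2)}) \subseteq P$, so $I$ sits inside the proper hyperideal $P$ and is therefore proper; this inclusion also gives ${\bf r^{(m,n)}}(I) \subseteq {\bf r^{(m,n)}}(P)$. For the reverse inclusion I would pick an arbitrary $p \in P$; because $P \subseteq Q$ we have $p \in Q$, hence $g(p^{(2)},1^{(n-2)}) = g(p,p,1^{(n-2)}) \in g(P,Q,1^{(n-2)}) = I$, and the radical characterization recalled in the introduction (the case $s = 2 \leq n$) yields $p \in {\bf r^{(m,n)}}(I)$. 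Thus $P \subseteq {\bf r^{(m,n)}}(I)$, and applying ${\bf r^{(m,n)}}$ to both sides gives ${\bf r^{(m,n)}}(P) \subseteq {\bf r^{(m,n)}}(I)$, so the two radicals coincide.

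With these facts the core of the argument is short. Suppose $0 \neq g(r_1^n) \in I$ for some $r_1^n \in R$. Since $I \subseteq P$, also $0 \neq g(r_1^n) \in P$, and as $P$ is $n$-ary weakly primary there is an index $1 \leq i \leq n$ with $r_i \in P$ or $g(r_1^{i-1},1,r_{i+1}^n) \in {\bf r^{(m,n)}}(P)$. In the latter case $g(r_1^{i-1},1,r_{i+1}^n) \in {\bf r^{(m,n)}}(P) = {\bf r^{(m,n)}}(I)$, which is precisely the second alternative in the definition of $wsq$-primary for $I$. In the former case $r_i \in P \subseteq Q$, so $g(r_i^{(2)},1^{(n-2)}) = g(r_i,r_i,1^{(n-2)}) \in g(P,Q,1^{(n-2)}) = I$, which is the first alternative. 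Either way $I = g(P,Q,1^{(n-2)})$ meets the defining condition, hence is $n$-ary $wsq$-primary.

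I do not expect a serious obstacle: the only points needing a little attention are checking that $g(P,Q,1^{(n-2)})$ is genuinely a hyperideal (this follows from $P$ and $Q$ being hyperideals together with the distributivity axiom of a Krasner $(m,n)$-hyperring, exactly as in the scalar ring case) and verifying the radical equality, which is where the hypothesis $P \subseteq Q$ enters. It is worth emphasizing in the write-up that $P \subseteq Q$ is used exactly at the step $r_i \in P \Rightarrow r_i \in Q \Rightarrow g(r_i^{(2)},1^{(n-2)}) \in I$; without it one would only conclude $g(r_i^{(2)},1^{(n-2)}) \in P$, which is not what the definition requires. Everything else is a direct appeal to the definitions of weakly primary and $wsq$-primary hyperideals and to the radical characterization quoted in the introduction.
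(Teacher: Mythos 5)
Your proof is correct and follows essentially the same route as the paper: pass from $0 \neq g(r_1^n) \in g(P,Q,1^{(n-2)}) \subseteq P$ to the weakly primary alternatives for $P$, use $P \subseteq Q$ to convert $r_i \in P$ into $g(r_i^{(2)},1^{(n-2)}) \in g(P,Q,1^{(n-2)})$, and use ${\bf r^{(m,n)}}(P) = {\bf r^{(m,n)}}(g(P,Q,1^{(n-2)}))$ for the other alternative. The only difference is that you actually justify this radical equality (via $g(p^{(2)},1^{(n-2)}) \in g(P,Q,1^{(n-2)})$ for $p \in P$), which the paper asserts without proof, so your write-up is if anything more complete.
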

\begin{proof}
Assume that $0 \neq g(r_1^n) \in g(P,Q,1^{(n-2)})$ for some $r_1^n \in R$. Since $P$ is an $n$-ary weakly primary hyperideal of $R$ and $g(P,Q,1^{(n-2)}) \subseteq P$, we get $r_i \in P$ or $g(r_1^{i-1},1,r_{i+1}^n) \in {\bf r^{(m,n)}}(P)$. From $P \subseteq Q$, it follows that $f(r_i^{(2)},1^{(n-2)}) \in g(P,Q,1^{(n-2)})$ or $g(r_1^{i-1},1,r_{i+1}^n) \in {\bf r^{(m,n)}}(P)={\bf r^{(m,n)}}(g(P,Q,1^{(n-2)}))$. Consequently, $g(P,Q,1^{(n-2)})$ is $n$-ary $wsq$-primary hyperideal of $R$.
\end{proof}
\begin{corollary}
Let  $P$ be an $n$-ary weakly primary hyperideal of $R$. Then $g(P^{(2)},1^{(n-2)})$ is $n$-ary $wsq$-primary hyperideal of $R$.
\end{corollary}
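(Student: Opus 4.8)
The plan is to obtain this as an immediate specialization of the preceding theorem. First I would observe that the hypothesis ``$P$ is an $n$-ary weakly primary hyperideal of $R$'' is exactly what is assumed there, and that the containment condition $P \subseteq Q$ is satisfied trivially when we take $Q = P$. Since $P$ is a proper hyperideal, so is $Q = P$, and hence the pair $(P, Q) = (P, P)$ meets all the hypotheses of that theorem.

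Next I would unwind the notation: by the convention for repeated arguments, $g(P, Q, 1^{(n-2)})$ with $Q = P$ is precisely $g(P, P, 1^{(n-2)}) = g(P^{(2)}, 1^{(n-2)})$. Applying the theorem then yields directly that $g(P^{(2)}, 1^{(n-2)})$ is an $n$-ary $wsq$-primary hyperideal of $R$, which is the assertion.

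There is no real obstacle in this argument; the work has already been done in the proof of the theorem. The only point worth checking is that $g(P^{(2)}, 1^{(n-2)})$ is a proper hyperideal so that the notion of $wsq$-primary applies to it, and this holds because $g(P^{(2)}, 1^{(n-2)}) \subseteq P \subsetneq R$; in any case it is already subsumed in the statement of the theorem, whose conclusion asserts that the output is a $wsq$-primary (in particular proper) hyperideal. If one preferred a self-contained argument, one could instead repeat the proof of the theorem verbatim with $Q$ replaced by $P$ throughout, but this would be redundant.
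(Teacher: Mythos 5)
Your argument is correct and is exactly the intended one: the corollary is the specialization $Q = P$ of the preceding theorem, since $P \subseteq P$ trivially and $g(P,P,1^{(n-2)}) = g(P^{(2)},1^{(n-2)})$. The paper offers no separate proof precisely because this immediate application is all that is needed, so your proposal matches the paper's approach.
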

\begin{theorem}
Let $P$ be a proper hyperideal of $R$. Then 
 $P$ is an $n$-ary $wsq$-primary hyperideal if and only if 
for every $r \in R$,  $\langle r \rangle \subseteq P_r$ or $P_r \subseteq {\bf r^{(m,n)}}(P)$ or $P_r \subseteq A_r$ such that $P_r=\{a \in R \ \vert \ g(r,a,1^{(n-2)}) \in P\}$ and $A_r=\{a \in R \ \vert \ g(r,a,1^{(n-2)})=0\}$.
\end{theorem}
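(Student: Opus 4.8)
The plan is to prove both implications through a careful study of the hyperideal $P_r=\{a\in R\mid g(r,a,1^{(n-2)})\in P\}$ for a fixed $r\in R$, relying at the outset on the equivalence $\langle r\rangle\subseteq P_r\iff g(r^{(2)},1^{(n-2)})\in P$. One direction is immediate, since $r=g(r,1^{(n-1)})\in\langle r\rangle$, so $\langle r\rangle\subseteq P_r$ forces $r\in P_r$, i.e. $g(r^{(2)},1^{(n-2)})\in P$; for the converse, if $g(r^{(2)},1^{(n-2)})\in P$ then for every $g(s,r,1^{(n-2)})\in\langle r\rangle$ we have $g(r,g(s,r,1^{(n-2)}),1^{(n-2)})=g(s,g(r^{(2)},1^{(n-2)}),1^{(n-2)})\in P$ because $P$ is a hyperideal. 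I would also record that $P_r$ and $A_r=\{a\in R\mid g(r,a,1^{(n-2)})=0\}$ are hyperideals of $R$, and that ${\bf r^{(m,n)}}(P)$ is a hyperideal.

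For $(\Longrightarrow)$: assume $P$ is $wsq$-primary and fix $r$. If $g(r^{(2)},1^{(n-2)})\in P$ then $\langle r\rangle\subseteq P_r$, so assume $g(r^{(2)},1^{(n-2)})\notin P$. First I would show $P_r\subseteq{\bf r^{(m,n)}}(P)\cup A_r$: given $a\in P_r$, either $g(r,a,1^{(n-2)})=0$, so $a\in A_r$, or $0\neq g(r,a,1^{(n-2)})\in P$, and then the $wsq$-primary hypothesis applied to the $n$-tuple $(r,a,1,\dots,1)$ together with $g(r^{(2)},1^{(n-2)})\notin P$ yields $a=g(a,1^{(n-1)})\in{\bf r^{(m,n)}}(P)$ — exactly the step already used in the analysis of $P_r$ for $sq$-primary hyperideals earlier in the paper. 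It then remains to upgrade $P_r\subseteq{\bf r^{(m,n)}}(P)\cup A_r$ to $P_r\subseteq{\bf r^{(m,n)}}(P)$ or $P_r\subseteq A_r$. Suppose neither holds and pick $a\in P_r\setminus{\bf r^{(m,n)}}(P)$ (hence $a\in A_r$, i.e. $g(r,a,1^{(n-2)})=0$) and $b\in P_r\setminus A_r$ (hence $b\in{\bf r^{(m,n)}}(P)$). For any $c\in f(a,b,0^{(m-2)})$ we have $c\in P_r$, and by distributivity of $g$ over $f$ and $g(r,a,1^{(n-2)})=0$, $g(r,c,1^{(n-2)})\in f(g(r,a,1^{(n-2)}),g(r,b,1^{(n-2)}),0^{(m-2)})=\{g(r,b,1^{(n-2)})\}$, which is nonzero; by the previous paragraph $c\in{\bf r^{(m,n)}}(P)$. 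Hence $f(a,b,0^{(m-2)})\subseteq{\bf r^{(m,n)}}(P)$, and by reversibility in the canonical $m$-ary hypergroup $(R,f)$, $a$ lies in a hyperproduct $f(c,-b,0^{(m-2)})$ with $c\in f(a,b,0^{(m-2)})$ and $-b$ the inverse of $b$; as $c,-b\in{\bf r^{(m,n)}}(P)$ and ${\bf r^{(m,n)}}(P)$ is a hyperideal, $a\in{\bf r^{(m,n)}}(P)$, a contradiction.

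For $(\Longleftarrow)$: assume the three-way alternative and let $0\neq g(r_1^n)\in P$. Setting $t=g(r_2^n,1)$, generalized associativity and the scalar identity $1$ give $g(r_1,t,1^{(n-2)})=g(r_1^n)$, so $t\in P_{r_1}$ while $t\notin A_{r_1}$ because $g(r_1,t,1^{(n-2)})=g(r_1^n)\neq 0$. Applying the hypothesis with $r=r_1$, the alternative $P_{r_1}\subseteq A_{r_1}$ is ruled out; if $\langle r_1\rangle\subseteq P_{r_1}$ then $g(r_1^{(2)},1^{(n-2)})\in P$, and if $P_{r_1}\subseteq{\bf r^{(m,n)}}(P)$ then $t=g(r_1^{0},1,r_2^n)\in{\bf r^{(m,n)}}(P)$. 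In either surviving case the defining condition of a $wsq$-primary hyperideal holds with $i=1$, so $P$ is $wsq$-primary.

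I expect the upgrading step in $(\Longrightarrow)$ — passing from the pointwise containment $P_r\subseteq{\bf r^{(m,n)}}(P)\cup A_r$ to a single containment — to be the main obstacle. Over an ordinary ring this is nothing but the two-ideal prime-avoidance argument, but in a Krasner $(m,n)$-hyperring one cannot work with a single sum $a+b$ and must instead show that the whole hyperproduct $f(a,b,0^{(m-2)})$ is absorbed into ${\bf r^{(m,n)}}(P)$ and then invoke reversibility of $(R,f)$, using crucially that ${\bf r^{(m,n)}}(P)$, being an intersection of prime hyperideals, is a hyperideal and hence closed under $f$ and under inverses. A secondary point needing attention is the routine check that $P_r$ and $A_r$ are hyperideals, which underpins $c\in P_r$ above and the very formulation of the statement.
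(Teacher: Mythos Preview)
Your argument follows the same outline as the paper's, and for the $(\Longleftarrow)$ direction the two are essentially identical (you fix $i=1$ while the paper works with a general $i$, but the logic is the same).

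For $(\Longrightarrow)$, however, your proof is strictly more complete than the paper's. After assuming $g(r^{(2)},1^{(n-2)})\notin P$, the paper takes a single $a\in P_r$, shows $a\in{\bf r^{(m,n)}}(P)$ when $g(r,a,1^{(n-2)})\neq 0$ and $a\in A_r$ when $g(r,a,1^{(n-2)})=0$, and then simply asserts the respective set containment $P_r\subseteq{\bf r^{(m,n)}}(P)$ or $P_r\subseteq A_r$. That inference is not justified as written: the elementwise reasoning only yields $P_r\subseteq{\bf r^{(m,n)}}(P)\cup A_r$. You anticipated exactly this point and supplied the needed ``upgrade'' via a hypergroup analogue of the two-ideal avoidance argument, using distributivity to see that $g(r,c,1^{(n-2)})=g(r,b,1^{(n-2)})\neq 0$ for every $c\in f(a,b,0^{(m-2)})$, and then reversibility of the canonical $m$-ary hypergroup together with the hyperideal structure of ${\bf r^{(m,n)}}(P)$ to force $a\in{\bf r^{(m,n)}}(P)$. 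That step is correct and is precisely what the paper's proof glosses over; your auxiliary observations (that $P_r$ is a hyperideal, and that $\langle r\rangle\subseteq P_r$ iff $g(r^{(2)},1^{(n-2)})\in P$) are also correct and are used implicitly in both arguments.
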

\begin{proof}
$( \Longrightarrow )$ Let $P$ is an $n$-ary $wsq$-primary hyperideal of $R$ and $r \in R$. If $g(r^{(2)},1^{(n-2)}) \in P$, then $\langle r \rangle \subseteq P_r$. Let us assume $g(r^{(2)},1^{(n-2)}) \notin P$. Take $a \in P_r$. So $g(r,a,1^{(n-2)}) \in P$. Let $0 \neq g(r,a,1^{(n-2)}).$ Since $P$ is an $n$-ary $wsq$-primary hyperideal and $g(r^{(2)},1^{(n-2)}) \notin P$, we have $a=g(a,1^{(n-1)}) \in {\bf r^{(m,n)}}(P)$ which implies $P_r \subseteq {\bf r^{(m,n)}}(P)$.  Let $0= g(r,a,1^{(n-2)}).$ Therefore  $a \in A_r$ which means $P_r \subseteq A_r$. 

$( \Longrightarrow )$ Let $0 \neq g(r_1^n) \in P$ for some $r_1^n \in R$ such that $g(r_i^{(2)},1^{(n-2)}) \notin P$ for some $1 \leq i \leq n$. By the hypothesis, we have $P_{r_i} \subseteq A_{r_i}$ or $P_{r_i} \subseteq {\bf r^{(m,n)}}(P)$. The former  case leads to a contradiction. In the latter case, we get $g(r_1^{i-1},1,r_{i+1}^n) \in P_{r_i} \subseteq {\bf r^{(m,n)}}(P)$, as needed.  
\end{proof}
 In the following, we consider the relationship between an $n$-ary $wsq$-primary hyperideal and its radical.
 \begin{theorem}
Assume that $R$ is a Krasner $(m,n)$-hyperring such that has no non-zero nilpotent elements. If $P$ is an $n$-ary $wsq$-primary hyperideal of $R$, then ${\bf r^{(m,n)}}(P)$ is an $n$-ary weakly prime hyperideal of $R$.
 \end{theorem}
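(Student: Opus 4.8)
The plan is to argue by cases according to whether or not $P$ is $sq$-primary, leaning on the facts already established about the radical of a $wsq$-primary hyperideal. Recall that an $n$-ary weakly prime hyperideal is a proper hyperideal $Q$ such that $0\neq g(r_1^n)\in Q$ forces $r_i\in Q$ for some $1\leq i\leq n$. The preliminary observation I would record first is that, since $R$ has no non-zero nilpotent elements, the description of the radical recalled in the Introduction forces ${\bf r^{(m,n)}}(0)=\{0\}$: if $a\in{\bf r^{(m,n)}}(0)$ then some $g$-power of $a$ (either $g(a^{(s)},1_R^{(n-s)})$ with $s\leq n$, or $g_{(l)}(a^{(s)})$ with $s=l(n-1)+1$) equals $0$, i.e. $a$ is nilpotent, hence $a=0$; the reverse inclusion is clear. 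Note also that $\{0\}$ is a proper hyperideal of $R$ since $1\neq 0$.

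Case 1: $P$ is $sq$-primary. Then $P$ is $q$-primary by Theorem \ref{ali}, so by the definition of $q$-primary the hyperideal ${\bf r^{(m,n)}}(P)$ is $n$-ary prime. Since the implication defining weak primeness is merely the implication defining primeness restricted to the case $g(r_1^n)\neq 0$, every $n$-ary prime hyperideal is $n$-ary weakly prime, and the conclusion follows.

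Case 2: $P$ is not $sq$-primary. Then Corollary \ref{radi} gives ${\bf r^{(m,n)}}(P)={\bf r^{(m,n)}}(0)$, and by the preliminary observation ${\bf r^{(m,n)}}(P)=\{0\}$. A hyperideal equal to $\{0\}$ contains no non-zero element, so the implication ``$0\neq g(r_1^n)\in{\bf r^{(m,n)}}(P)$ implies $r_i\in{\bf r^{(m,n)}}(P)$ for some $i$'' holds vacuously; together with properness this shows ${\bf r^{(m,n)}}(P)$ is $n$-ary weakly prime.

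Once Theorem \ref{radic}, Theorem \ref{ali} and Corollary \ref{radi} are in hand the proof is essentially immediate, so I do not expect a serious obstacle. The only points needing a line of justification are the collapse ${\bf r^{(m,n)}}(0)=\{0\}$ under the nilpotent-free hypothesis (which is exactly where that hypothesis enters) and the trivial verification that $\{0\}$ is proper, which is what lets it count as a genuine weakly prime hyperideal rather than a degenerate non-case.
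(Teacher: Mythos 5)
Your argument is correct, and it takes a genuinely different route from the paper's. The paper proves the statement directly at the element level, in parallel with the proof of Theorem \ref{ali}: given $0\neq g(r_1^n)\in{\bf r^{(m,n)}}(P)$ it passes to a power $g(g(r_1^n)^{(s)},1^{(n-s)})\in P$, invokes the absence of non-zero nilpotents precisely to guarantee that this power is still non-zero, and then regroups the factors as $g\big(g(r_i^{(s)},1^{(n-s)}),g(g(r_1^{i-1},1,r_{i+1}^n)^{(s)},1^{(n-s)}),1^{(n-2)}\big)$ so the $wsq$-primary condition applies, iterating until some $r_j$ lands in ${\bf r^{(m,n)}}(P)$. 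You instead argue structurally: if $P$ is $sq$-primary, Theorem \ref{ali} makes ${\bf r^{(m,n)}}(P)$ prime, hence weakly prime, with no use of the nilpotent-free hypothesis; if $P$ is not $sq$-primary, Corollary \ref{radi} gives ${\bf r^{(m,n)}}(P)={\bf r^{(m,n)}}(0)$, and the hypothesis, through the power characterization of the radical recalled in the introduction, collapses this to $\langle 0\rangle=\{0\}$, which is proper (since $1\neq 0$) and vacuously weakly prime. Your route is shorter, localizes the use of the reduced hypothesis to the single identity ${\bf r^{(m,n)}}(0)=\{0\}$, and in fact yields the sharper dichotomy that ${\bf r^{(m,n)}}(P)$ is either prime or the zero hyperideal; the paper's computation is self-contained in that it does not lean on Theorem \ref{radic} or Corollary \ref{radi}, and it exhibits the element-level mechanism, at the cost of having to track non-vanishing of products through the regrouping.
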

 \begin{proof}
 Let $P$ is an $n$-ary $wsq$-primary hyperideal of $R$. Supoose that $0 \neq g(r_1^n) \in {\bf r^{(m,n)}}(P)$ for some $r_1^n \in R$. Then there exists $s \in \mathbb{N}$ such that if $s \leq n$, then $g(g(r_1^n)^{(s)},1^{(n-s)}) \in P$. Since $R$  has no non-zero nilpotent elements, we conclude that $0 \neq g(g(r_1^n)^{(s)},1^{(n-s)})$. Since $P$ is an $n$-ary $wsq$-primary hyperideal of $R$ and
 
  $\hspace{1.5cm}g(r_i^{(s)},g(r_1^{i-1},1,r_{i+1}^n)^{(s)},1^{(n-2s)})$

$\hspace{2cm}=g(r_i^{(s)},g(r_1^{i-1},1,r_{i+1}^n)^{(s)},g(1^{(n)}),1^{(n-2s-1)})$

$\hspace*{2cm}=g(g(r_i^{(s)},1^{(n-s)}),g(g(r_1^{i-1},1,r_{i+1}^n)^{(s)},1^{(n-s)}),1^{(n-2)}) $

$\hspace{2cm}\in P$, \\
we have $g(g(r_i^{(s)},1^{(n-s)})^{(2)},1^{(n-2)})=g(r_i^{(2s)},1^{(n-2s)}) \in P$ which means $r_i \in  {\bf r^{(m,n)}}(P)$ or $g(r_1^{i-1},1,r_{i+1}^n)^{(s)},1^{(n-s)}) \in {\bf r^{(m,n)}}(P)$ which implies  $g(r_1^{i-1},1,r_{i+1}^n) \in {\bf r^{(m,n)}}(P)$ for some $1 \leq i \leq n$. If $s=l(n-1)+1$, then we are done by a similar argument.  Thus ${\bf r^{(m,n)}}(P)$ is an $n$-ary weakly prime hyperideal of $R$.
 \end{proof}
Recall from \cite{d1} that  a mapping
$h : R_1 \longrightarrow R_2$ is called a homomorphism for some Krasner $(m, n)$-hyperrings $(R_1, f_1, g_1)$ and $(R_2, f_2, g_2)$ if for all $x^m _1 \in R_1$ and $y^n_ 1 \in R_1$ we have
\begin{itemize}
\item[\rm(i)]~$h(1_{R_1})=1_{R_2}$,
\item[\rm(ii)]~$h(f_1(x_1,..., x_m)) = f_2(h(x_1),...,h(x_m)),$
\item[\rm(iii)]~$h(g_1(y_1,..., y_n)) = g_2(h(y_1),...,h(y_n)). $
\end{itemize}
 \begin{theorem} \label{map} 
Assume that $(R_1,f_1,g_1)$ and $(R_2,f_2,g_2)$ are two commutative Krasner $(m,n)$-hyperrings and $ h:R_1 \longrightarrow R_2$ is a homomorphism. Then: 
\begin{itemize}
\item[\rm(1)]~ If $h$ is a monomorphism and $P_2$ is an $n$-ary $wsq$-primary hyperideal of $R_2$, then $h^{-1} (P_2)$ is an $n$-ary $wsq$-primary hyperideal of $R_1$.
\item[\rm(2)]~ If $h$ is an epimorphism and $P_1$ is an $n$-ary $wsq$-primary hyperideal  of $R_1$ with $Ker(h) \subseteq P_1$, then $h(P_1)$ is an $n$-ary $wsq$-primary hyperideal of $R_2$.
\end{itemize}
\end{theorem}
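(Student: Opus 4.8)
The plan is to verify, in each of the two parts, the defining implication of an $n$-ary $wsq$-primary hyperideal directly, after first checking that the candidate hyperideal is proper and that the hypothesis ``$0\neq$ ($g$-product)'' transfers along $h$. I will use repeatedly that a homomorphism preserves $0$ (so $h(0_{R_1})=0_{R_2}$) and commutes with the $n$-ary products $g(\cdot^{(s)},1^{(n-s)})$ and $g_{(l)}(\cdot^{(s)})$, together with the description of the radical recalled in the introduction: $x\in{\bf r^{(m,n)}}(I)$ iff $g(x^{(s)},1_R^{(n-s)})\in I$ for some $s\le n$, or $g_{(l)}(x^{(s)})\in I$ for $s=l(n-1)+1$.

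For (1): first, $h^{-1}(P_2)$ is a hyperideal of $R_1$ and is proper since $h(1_{R_1})=1_{R_2}\notin P_2$. The key preliminary is the radical compatibility $h^{-1}({\bf r^{(m,n)}}(P_2))\subseteq{\bf r^{(m,n)}}(h^{-1}(P_2))$, which follows from the radical description, because if $h(x)\in{\bf r^{(m,n)}}(P_2)$ then $h(g_1(x^{(s)},1^{(n-s)}))=g_2(h(x)^{(s)},1^{(n-s)})\in P_2$ (or the analogous statement with $g_{(l)}$), whence $g_1(x^{(s)},1^{(n-s)})\in h^{-1}(P_2)$. Now take $r_1^n\in R_1$ with $0\neq g_1(r_1^n)\in h^{-1}(P_2)$. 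Since $h$ is injective and $g_1(r_1^n)\neq 0=h(0_{R_1})$, we get $0\neq h(g_1(r_1^n))=g_2(h(r_1),\cdots,h(r_n))\in P_2$. Applying that $P_2$ is $wsq$-primary to the tuple $(h(r_1),\cdots,h(r_n))$ yields, for some $1\le i\le n$, either $g_2(h(r_i)^{(2)},1^{(n-2)})\in P_2$ — and pulling back gives $g_1(r_i^{(2)},1^{(n-2)})\in h^{-1}(P_2)$ — or $g_2(h(r_1^{i-1}),1,h(r_{i+1}^n))\in{\bf r^{(m,n)}}(P_2)$, and since this element equals $h(g_1(r_1^{i-1},1,r_{i+1}^n))$, the radical compatibility gives $g_1(r_1^{i-1},1,r_{i+1}^n)\in{\bf r^{(m,n)}}(h^{-1}(P_2))$, as needed.

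For (2): the hypothesis $Ker(h)\subseteq P_1$ is what drives the argument, since it gives $h^{-1}(h(P_1))=P_1$. Hence $h(P_1)$ is a hyperideal of $R_2$ (as $h$ is surjective) and is proper, because $1_{R_2}\in h(P_1)$ would give $1_{R_1}\in h^{-1}(h(P_1))=P_1$. As in (1), surjectivity of $h$ yields $h({\bf r^{(m,n)}}(P_1))\subseteq{\bf r^{(m,n)}}(h(P_1))$ from the radical description. Now suppose $0\neq g_2(s_1^n)\in h(P_1)$ for $s_1^n\in R_2$; choosing $r_j\in R_1$ with $h(r_j)=s_j$, we have $h(g_1(r_1^n))=g_2(s_1^n)\in h(P_1)$, so $g_1(r_1^n)\in h^{-1}(h(P_1))=P_1$, and $g_1(r_1^n)\neq 0$ since $h(g_1(r_1^n))=g_2(s_1^n)\neq 0$. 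Because $P_1$ is $wsq$-primary, for some $1\le i\le n$ we get $g_1(r_i^{(2)},1^{(n-2)})\in P_1$ or $g_1(r_1^{i-1},1,r_{i+1}^n)\in{\bf r^{(m,n)}}(P_1)$; applying $h$ and using the two compatibilities translates these into $g_2(s_i^{(2)},1^{(n-2)})\in h(P_1)$ or $g_2(s_1^{i-1},1,s_{i+1}^n)\in{\bf r^{(m,n)}}(h(P_1))$, so $h(P_1)$ is $wsq$-primary.

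The main obstacle — really the only nonroutine point — is establishing the two radical compatibilities $h^{-1}({\bf r^{(m,n)}}(P_2))\subseteq{\bf r^{(m,n)}}(h^{-1}(P_2))$ and $h({\bf r^{(m,n)}}(P_1))\subseteq{\bf r^{(m,n)}}(h(P_1))$, which is precisely where the explicit ``power'' characterization of the radical recalled in the introduction must be invoked; everything else is bookkeeping with the homomorphism axioms and the kernel hypothesis, the latter being exactly what guarantees $h^{-1}(h(P_1))=P_1$ and hence that the product $g_1(r_1^n)$ of preimages genuinely lands in $P_1$.
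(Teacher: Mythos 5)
Your proof is correct and follows essentially the same route as the paper's: transfer the nonzero product along $h$ (using injectivity in (1) and $Ker(h)\subseteq P_1$ in (2)), apply the $wsq$-primary hypothesis, and translate the conclusion back via the compatibility of $h$ with radicals. The only difference is cosmetic: you explicitly derive the inclusions $h^{-1}({\bf r^{(m,n)}}(P_2))\subseteq{\bf r^{(m,n)}}(h^{-1}(P_2))$ and $h({\bf r^{(m,n)}}(P_1))\subseteq{\bf r^{(m,n)}}(h(P_1))$ from the power characterization of the radical, whereas the paper simply asserts them (indeed the equality in the first case) without proof.
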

\begin{proof}
$(1)$ Suppose that  $0 \neq g_1(r_1^n) \in h^{-1} (P_2)$ for some $r_1^n \in R_1$. Since $h$ is a monomorphism, we conclude that  $0 \neq h(g_1(r_1^n))=g_2(h(r_1), \cdots,h(r_n)) \in P_2$. Since $P_2$ is an $n$-ary $wsq$-primary hyperideal of $R_2$, then 

$g_2(h(r_i)^{(2)},1_{R_2}^{(n-2)})=h(g_1(r_i^{(2)},1_{R_1}^{(n-2)})) \in P_2$ \\or 

$g_2(h(r_1),\cdots,h(r_{i-1}),1_{R_2},h(r_{i+1}),\cdots,h(r_n))=h(g_1(r_1^{i-1},1_{R_1},r_{i+1}^n)) \in {\bf r^{(m,n)}}(P_2)$ for some $1 \leq i \leq n$. This means that $g_1(r_i^2,1_{R_1}^{(n-2)}) \in h^{-1}(P_2)$ or $g(r_1^{i-1},1_{R_1},r_{i+1}^n) \in h^{-1}({\bf r^{(m,n)}}(P_2))={\bf r^{(m,n)}}(h^{-1}(P_2))$. Consequently,  $h^{-1}(P_2)$ is an $n$-ary $wsq$-primary hyperideal of $R_1$.

$(2)$ Let  $0 \neq g_2(t_1^n) \in h(P_1)$ for some $t_1^n \in R_2$. Since $h$ is an epimorphism, then there exist $r_1^n \in R_1$ with $h(r_1)=t_1,...,h(r_n)=t_n$. Therefore
$0 \neq h(g_1(r_1^n))=g_2(h(r_1),...,h(r_n))=g_2(t_1^n) \in h(P_1)$.
Since $Ker(h) \subseteq P_1$, then we have $0 \neq g_1(r_1^n) \in P_1$. Since $P_1$ is an $n$-ary $wsq$-primary hyperideal  of $R_1$, we obtain $g_1(r_i^{(2)},1_{R_1}^{(n-2)}) \in P_1$ or $g_1(r_1^{i-1},1_{R_1},r_{i+1}^n) \in {\bf r^{(m,n)}}(P_1) $. Therefore 

$\hspace{1.5cm}h(g_1(r_i^{(2)},1_{R_1}^{(n-2)}))=g_2(h(r_i)^{(2)},1_{R_2}^{(n-2)})=g_2
(t_i^{(2)},1_{R_2}^{(n-2)}) \in h(P_1)$\\
 or
 
  $\hspace{1cm}h(g_1(r_1^{i-1},1_{R_1},r_{i+1}^n))=g_2(h(r_1),\cdots,h(r_{i-1}),1_{R_2},h(r_{i+1}),\cdots,h(r_n))$
  
  $\hspace{4.3cm}=g_2(t_1^{i-1},1_{R_2},t_{i+1}^n)$
  
  $\hspace{4.3cm} \in  h({\bf r^{(m,n)}}(P_1))$
  
  $\hspace{4.3cm} \subseteq {\bf r^{(m,n)}}(h(P_1))$\\
Thus $h(P_1)$ is an $n$-ary $wsq$-primary hyperideal of $R_2$.
\end{proof}
\begin{corollary} 
Let  $P$ and $Q$ be two proper hyperideals of  $R$ with $Q \subseteq P$. 
\begin{itemize}
\item[\rm(1)]~ If $P$ is an $n$-ary $wsq$-primary hyperideal of $R$, then $P/Q$ is an $n$-ary $wsq$-primary hyperideal of $R/Q$. 
\item[\rm(2)]~ If $Q$ is an $n$-ary $wsq$-primary hyperideal of $R$ and $P/Q$ is an $n$-ary $wsq$-primary hyperideal of $R/Q$, then $P$ is an $n$-ary $wsq$-primary hyperideal of $R$.
\end{itemize}
\end{corollary}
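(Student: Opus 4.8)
The plan is to obtain part (1) as an immediate corollary of Theorem \ref{map} and to establish part (2) by a direct case analysis. For (1), let $\pi\colon R\longrightarrow R/Q$ be the canonical projection; this is an epimorphism of Krasner $(m,n)$-hyperrings with $Ker(\pi)=Q$, and $Q\subseteq P$ by hypothesis. Applying Theorem \ref{map}(2) with $h=\pi$ and $P_1=P$ then gives that $\pi(P)=P/Q$ is an $n$-ary $wsq$-primary hyperideal of $R/Q$, which is exactly (1). So this direction needs only the observations that $Ker(\pi)=Q$ and $\pi(P)=P/Q$.

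For (2), Theorem \ref{map} does not apply directly, since $\pi$ is not a monomorphism, so I would argue from the definition. Assume $Q$ is $n$-ary $wsq$-primary, $P/Q$ is $n$-ary $wsq$-primary in $R/Q$, and take $r_1^n\in R$ with $0\neq g(r_1^n)\in P$. The key step is to split according to whether $g(r_1^n)\in Q$ or not. If $g(r_1^n)\in Q$, then $0\neq g(r_1^n)\in Q$ and, since $Q$ is $wsq$-primary, $g(r_i^{(2)},1^{(n-2)})\in Q$ or $g(r_1^{i-1},1,r_{i+1}^n)\in {\bf r^{(m,n)}}(Q)$ for some $1\leq i\leq n$; as $Q\subseteq P$ we have ${\bf r^{(m,n)}}(Q)\subseteq {\bf r^{(m,n)}}(P)$, so in either case the required $wsq$-primary conclusion for $P$ holds.

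If instead $g(r_1^n)\notin Q$, I would pass to $R/Q$: writing $\bar x$ for the coset of $x$, the element $g(\bar r_1,\dots,\bar r_n)=\overline{g(r_1^n)}$ lies in $P/Q$ and is nonzero in $R/Q$ precisely because $g(r_1^n)\notin Q$. Since $P/Q$ is $wsq$-primary, for some $1\leq i\leq n$ we get $g(\bar r_i^{(2)},\bar 1^{(n-2)})\in P/Q$ or $g(\bar r_1^{i-1},\bar 1,\bar r_{i+1}^n)\in {\bf r^{(m,n)}}(P/Q)$. Then I would invoke the correspondence $\pi^{-1}(P/Q)=P$ together with the element-wise description of the radical recalled in the introduction, which yields $x\in {\bf r^{(m,n)}}(P)$ if and only if $\bar x\in {\bf r^{(m,n)}}(P/Q)$, i.e. $\pi^{-1}({\bf r^{(m,n)}}(P/Q))={\bf r^{(m,n)}}(P)$. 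Pulling the two alternatives back along $\pi$ gives $g(r_i^{(2)},1^{(n-2)})\in P$ or $g(r_1^{i-1},1,r_{i+1}^n)\in {\bf r^{(m,n)}}(P)$, as required.

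The only genuinely delicate point I anticipate is the bookkeeping around the quotient: identifying the zero of $R/Q$ with the coset $Q$ (so that ``$g(r_1^n)\notin Q$'' and ``$\overline{g(r_1^n)}\neq 0$'' really are the same condition), confirming $Ker(\pi)=Q$ for part (1), and checking $\pi^{-1}({\bf r^{(m,n)}}(P/Q))={\bf r^{(m,n)}}(P)$. Each of these is a routine consequence of the correspondence theorem for hyperideals and the element-wise radical characterization, but they are what make the case $g(r_1^n)\in Q$ and the transfer between $R$ and $R/Q$ go through without gaps.
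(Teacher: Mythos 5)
Your proposal is correct and follows essentially the same route as the paper: part (1) by applying Theorem \ref{map}(2) to the canonical projection $\pi\colon R\to R/Q$ with $Ker(\pi)=Q\subseteq P$, and part (2) by splitting on whether $g(r_1^n)\in Q$, handling the first case via the $wsq$-primariness of $Q$ and the second by passing to $R/Q$ and pulling the alternatives back, using ${\bf r^{(m,n)}}(P/Q)={\bf r^{(m,n)}}(P)/Q$ (equivalently your $\pi^{-1}({\bf r^{(m,n)}}(P/Q))={\bf r^{(m,n)}}(P)$). The bookkeeping points you flag (zero of $R/Q$ being the coset $Q$, the correspondence of hyperideals and radicals) are exactly what the paper uses implicitly with the coset notation $f(\cdot,Q,0^{(m-2)})$.
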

\begin{proof}
$(1)$ Consider the  epimorphism $\pi: R \longrightarrow R/Q$, defined by $r \longrightarrow f(r,Q,0^{(m-2)})$. Now, the claim follows by using Theorem \ref{map} (2). 

$(2)$ Assume that $0 \neq g(r_1^n) \in P$ for $r_1^n \in R$. If $0 \neq g(r_1^n) \in Q$, then $g(r_i^{(2)},1^{(n-2)}) \in Q \subseteq P$ or $g(r_1^{i-1},1,r_{i+1}^n) \in {\bf r^{(m,n)}} (Q) \subseteq {\bf r^{(m,n)}} (P)$ for some $1 \leq i \leq n$ as $Q$ is an $n$-ary $wsq$-primary hyperideal of $R$. If $0 \neq g(r_1^n) \notin Q$, then we conclude that 

 $ 0 \neq f(g(r_1^n),Q,0^{(m-2)})=g(f(r_1,Q,0^{(m-2)}),\cdots,f(r_n,Q,0^{(m-2)})) \in P/Q.$\\
 Since $P/Q$ is an $n$-ary $wsq$-primary hyperideal of $R/Q$, we get 
 
$g(f(r_i,Q,0^{(m-2)})^{(2)},f(1,Q,0^{(m-2)})^{(n-2)})=
f(g(r_i^{(2)},1^{(n-2)}),Q,0^{(m-2)}) \in P/Q$\\ or 
   
   $g(f(r_1,Q,0^{(m-2)}),\cdots,f(r_{i-1},Q,0^{(m-2)}),f(1,Q,0^{(m-2)}),f(r_{i+1},Q,0^{(m-2)}),$
   
   $\hspace{2cm}\cdots,f(r_n,Q,0^{(m-2)}))$
   
   $\hspace{1cm}=f(g(r_1^{i-1},1,r_{i+1}^n),Q,0^{(m-2)})$
   
   $ \hspace{1cm}\in {\bf r^{(m,n)}}(P/Q)$
   
   $\hspace{1cm} = {\bf r^{(m,n)}}(P)/Q$.\\
   This means that $g(r_i^{(2)},1^{(n-2)}) \in P$ or $g(r_1^{i-1},1,r_{i+1}^n) \in {\bf r^{(m,n)}}(P)$. This shows that $P$ is an $n$-ary $wsq$-primary hyperideal of $R$.
\end{proof}
\begin{theorem}
Let  $H$ be a subhyperring of $R$ and  $P$  be a proper hyperideals of  $R$ such that  $H \nsubseteq P$. If $P$ is  an $n$-ary $wsq$-primary hyperideal of $R$, then $H \cap P$ is an $n$-ary $wsq$-primary hyperideal of $H$.
\end{theorem}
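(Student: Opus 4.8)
The plan is to verify the defining property of an $n$-ary $wsq$-primary hyperideal directly for $H \cap P$ inside the subhyperring $H$, using that $P$ already has this property in $R$. First I would note that $H \cap P$ is a proper hyperideal of $H$: it is a hyperideal since $H$ is a subhyperring and $P$ a hyperideal of $R$, and it is proper because $H \nsubseteq P$. Since $H$ is a subhyperring, it inherits the scalar identity $1$, so the notions $g(r_i^{(2)},1^{(n-2)})$ and $g(r_1^{i-1},1,r_{i+1}^n)$ make sense in $H$. The key point to keep straight is the relationship between radicals: for any hyperideal, ${\bf r^{(m,n)}}(H \cap P)$ taken inside $H$ equals $H \cap {\bf r^{(m,n)}}(P)$, because $x \in {\bf r^{(m,n)}}(H\cap P)$ iff some $g$-power of $x$ (in the sense of the characterization recalled in the introduction) lands in $H \cap P$, and for $x \in H$ that $g$-power automatically lies in $H$, so this is equivalent to the $g$-power lying in $P$, i.e. $x \in H \cap {\bf r^{(m,n)}}(P)$.

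Next I would carry out the main verification. Take $r_1^n \in H$ with $0 \neq g(r_1^n) \in H \cap P$. In particular $0 \neq g(r_1^n) \in P$, and since $P$ is $n$-ary $wsq$-primary in $R$, there is some $1 \leq i \leq n$ with $g(r_i^{(2)},1^{(n-2)}) \in P$ or $g(r_1^{i-1},1,r_{i+1}^n) \in {\bf r^{(m,n)}}(P)$. In the first case, since all the $r_j$ and the identity $1$ lie in $H$ and $H$ is closed under $g$, we have $g(r_i^{(2)},1^{(n-2)}) \in H$, hence $g(r_i^{(2)},1^{(n-2)}) \in H \cap P$. In the second case, again $g(r_1^{i-1},1,r_{i+1}^n) \in H$, so $g(r_1^{i-1},1,r_{i+1}^n) \in H \cap {\bf r^{(m,n)}}(P) = {\bf r^{(m,n)}}(H \cap P)$ by the radical identity from the first step. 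Either way the $wsq$-primary condition holds in $H$, completing the proof.

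I expect the only genuinely delicate point to be the identity ${\bf r^{(m,n)}}(H \cap P) = H \cap {\bf r^{(m,n)}}(P)$, since the radical in the hyperring setting is defined as an intersection of prime hyperideals and the cleanest way to handle it is via the element-wise characterization (existence of $s$ with $g(a^{(s)},1^{(n-s)}) \in I$ for $s \leq n$, or $g_{(l)}(a^{(s)}) \in I$ for $s = l(n-1)+1$) recalled in the introduction; one must check that this characterization is preserved when intersecting with a subhyperring, which is immediate from closure of $H$ under $g$ and $f$. Everything else is bookkeeping: that $H \cap P$ is a proper hyperideal of $H$, and that the elements appearing in the $wsq$-primary dichotomy stay inside $H$. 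I would also remark, as the theorem implicitly suggests, that $H \cap P$ is proper precisely because the hypothesis $H \nsubseteq P$ rules out $H \cap P = H$.
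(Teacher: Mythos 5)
Your argument is correct, but it is not the route the paper takes: the paper proves this in one line by applying Theorem \ref{map}(1) (preimage of a $wsq$-primary hyperideal under a monomorphism) to the inclusion $j\colon H \longrightarrow R$, $j(x)=x$, so that $j^{-1}(P)=H\cap P$. What you have done is, in effect, specialize the proof of that theorem to the inclusion map and verify the definition directly: you unfold the dichotomy for $0\neq g(r_1^n)\in H\cap P$ and handle the radical via the identity ${\bf r^{(m,n)}}(H\cap P)=H\cap {\bf r^{(m,n)}}(P)$. This is sound, and in fact you only need the inclusion $H\cap {\bf r^{(m,n)}}(P)\subseteq {\bf r^{(m,n)}}(H\cap P)$, which follows from the power characterization of the radical together with closure of $H$ under $g$; note that the paper's proof of Theorem \ref{map}(1) uses the exact analogue $h^{-1}({\bf r^{(m,n)}}(P_2))={\bf r^{(m,n)}}(h^{-1}(P_2))$ without further comment, so you are making explicit precisely the point the paper leaves implicit. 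Both arguments also tacitly require $1\in H$ (in the paper's version this is forced by the requirement $h(1_{R_1})=1_{R_2}$ in the definition of homomorphism; in yours by the appearance of $g(r_i^{(2)},1^{(n-2)})$ inside $H$), so your assumption matches the paper's. In short, the paper's approach buys brevity by reusing the already-established monomorphism theorem, while yours buys self-containedness and surfaces the radical-compatibility step; mathematically they coincide.
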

\begin{proof}
Applying Theorem \ref{map} (1) to the injection $j:H \longrightarrow R$, defined by $j(x)=x$ for all $x \in H$, we conclude that $j^{-1}(P)=H \cap P$ is an $n$-ary $wsq$-primary hyperideal of $H$.
\end{proof}
Recall from \cite{sorc1} that   a non-empty subset $S$ of a Krasner $(m,n)$-hyperring $R$  is called an $n$-ary multiplicative subset if  $g(s_1^n) \in S$ for $s_1,...,s_n \in S$. The concept of Krasner $(m,n)$-hyperring of fractions was introduced in \cite{mah5}.
\begin{theorem} 
Assume that $R$ is a Krasner $(m,n)$-hyperring and $S$ is an $n$-ary multiplicative subset of $R$ such that $1 \in S$. If $P$ is an $n$-ary $wsq$-primary hyperideal of $R$ such that $P\cap S=\varnothing$, then $S^{-1}P$ is an $n$-ary $wsq$-primary hyperideal of $S^{-1}R$.
\end{theorem}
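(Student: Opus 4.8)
The plan is to argue directly inside the Krasner $(m,n)$-hyperring of fractions $S^{-1}R$. Recall that its elements are the classes $\tfrac{r}{s}$ with $r\in R$ and $s\in S$, that the $n$-ary operation is given by $g\big(\tfrac{r_1}{s_1},\dots,\tfrac{r_n}{s_n}\big)=\tfrac{g(r_1^n)}{g(s_1^n)}$, that $\tfrac{a}{s}=\tfrac{b}{t}$ holds iff $g(u,t,a,1^{(n-3)})=g(u,s,b,1^{(n-3)})$ for some $u\in S$ (so in particular $\tfrac{a}{s}=0$ in $S^{-1}R$ iff $g(u,a,1^{(n-2)})=0$ for some $u\in S$), and that $S^{-1}P=\{\tfrac{p}{s}\mid p\in P,\ s\in S\}$. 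First I would observe that $S^{-1}P$ is proper precisely because $P\cap S=\varnothing$. I would also record the routine identity ${\bf r^{(m,n)}}(S^{-1}P)=S^{-1}({\bf r^{(m,n)}}(P))$, which follows from the description of the radical recalled in the introduction: for $x\in R$ one has $\tfrac{x}{1}\in{\bf r^{(m,n)}}(S^{-1}P)$ iff some $g$-power of $x$ lies in $P$ after multiplication by a suitable element of $S$, i.e. iff $\tfrac{x}{1}\in S^{-1}({\bf r^{(m,n)}}(P))$.

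Next, suppose $0\neq g\big(\tfrac{r_1}{s_1},\dots,\tfrac{r_n}{s_n}\big)\in S^{-1}P$, that is, $\tfrac{g(r_1^n)}{g(s_1^n)}=\tfrac{p}{s}$ for some $p\in P$, $s\in S$. Clearing denominators and absorbing the common denominator $g(s_1^n)$ together with the witness of this equality into a single element $t\in S$ (using the associativity of $g$ and the scalar identity $1$) gives $g(t,g(r_1^n),1^{(n-2)})\in P$, while nonvanishing of the class forces $g(t,g(r_1^n),1^{(n-2)})\neq 0$. Put $a_1=g(t,r_1,1^{(n-2)})$ and $a_j=r_j$ for $2\le j\le n$; by associativity and commutativity of $g$ one has $g(a_1^n)=g(t,g(r_1^n),1^{(n-2)})$, hence $0\neq g(a_1^n)\in P$. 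Since $P$ is $n$-ary $wsq$-primary, there is $1\le i\le n$ with $g(a_i^{(2)},1^{(n-2)})\in P$ or $g(a_1^{i-1},1,a_{i+1}^n)\in{\bf r^{(m,n)}}(P)$.

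Finally I would transport both alternatives to $S^{-1}R$. If $i\ge 2$ then $a_i=r_i$, so $g(r_i^{(2)},1^{(n-2)})\in P$ yields $g\big((\tfrac{r_i}{s_i})^{(2)},(\tfrac{1}{1})^{(n-2)}\big)=\tfrac{g(r_i^{(2)},1^{(n-2)})}{g(s_i^{(2)},1^{(n-2)})}\in S^{-1}P$, and $g(r_1^{i-1},1,r_{i+1}^n)\in{\bf r^{(m,n)}}(P)$ yields, via the radical identity, $g\big(\tfrac{r_1}{s_1},\dots,\tfrac{1}{1},\dots,\tfrac{r_n}{s_n}\big)\in{\bf r^{(m,n)}}(S^{-1}P)$. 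If $i=1$, then in the first alternative $g(a_1^{(2)},1^{(n-2)})=g\big(g(t^{(2)},1^{(n-2)}),g(r_1^{(2)},1^{(n-2)}),1^{(n-2)}\big)\in P$ with $g(t^{(2)},1^{(n-2)})\in S$, so multiplying numerator and denominator of $\tfrac{g(r_1^{(2)},1^{(n-2)})}{g(s_1^{(2)},1^{(n-2)})}$ by $g(t^{(2)},1^{(n-2)})$ exhibits $g\big((\tfrac{r_1}{s_1})^{(2)},(\tfrac{1}{1})^{(n-2)}\big)\in S^{-1}P$; in the second alternative $g(1,r_2^n)\in{\bf r^{(m,n)}}(P)$, whence $\tfrac{g(1,r_2^n)}{g(1,s_2^n)}\in S^{-1}({\bf r^{(m,n)}}(P))={\bf r^{(m,n)}}(S^{-1}P)$ because $g(1,s_2^n)\in S$. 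Assembling the cases shows $S^{-1}P$ is $n$-ary $wsq$-primary. I expect the main obstacle to be purely clerical — executing the $n$-ary associativity and commutativity rearrangements cleanly, in particular absorbing the extra factor $t$ when $i=1$, and nailing down ${\bf r^{(m,n)}}(S^{-1}P)=S^{-1}({\bf r^{(m,n)}}(P))$ from the stated radical description — rather than anything conceptually hard.
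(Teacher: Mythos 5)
Your argument is correct and follows essentially the same route as the paper: clear denominators via a witness from $S$, fold that witness into one coordinate, apply the $wsq$-primary hypothesis in $R$, and push each alternative back to $S^{-1}R$ using $S^{-1}({\bf r^{(m,n)}}(P))={\bf r^{(m,n)}}(S^{-1}P)$ (the paper cites Lemma 4.7 of the fractions paper for this identity and absorbs the witness into $r_n$ rather than $r_1$). The only clerical correction is that for $i\ge 2$ the second alternative is $g(a_1,r_2^{i-1},1,r_{i+1}^n)\in {\bf r^{(m,n)}}(P)$ with $a_1=g(t,r_1,1^{(n-2)})$, not $g(r_1^{i-1},1,r_{i+1}^n)$; multiplying numerator and denominator of the corresponding fraction by $t$ — exactly the manipulation you already perform in the $i=1$ case, and the one the paper uses with its witness $v$ — yields the same conclusion.
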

\begin{proof}
Let $\frac{r_1}{s_1},...,\frac{r_n}{s_n} \in S^{-1}R$ with $0 \neq G(\frac{r_1}{s_1},...,\frac{r_n}{s_n}) \in S^{-1}P$ .
Therefore  $\frac{g(r_1^n)}{g(s_1^n)} \in S^{-1}P$. It means that there exists $v \in S$ such that $0 \neq g(v,g(r_1^n),1^{(n-2)}) \in P$ and so $g(r_1^{i-1},g(v,r_i,1^{(n-2)}),r_{i+1}^n) \in I$. Without destroying the generality, we may assume that $g(r_1^{n-1},g(v,r_n,1^{(n-2)})) \in P$. Since $P$ is an $n$-ary $wsq$-primary hyperideal of $R$, then at least one of the cases holds: $g(a_i^{(2)},1^{(n-2)}) \in P$ for some $1 \leq i \leq n-1$, $g(g(v,r_n,1^{(n-2)})^{(2)},1^{(n-2)}) \in P$, $g(r_1^{i-1},1,r_{i+1}^{n-1},g(v,r_n,1^{(n-2)})) \in {\bf r^{(m,n)}}(P)$ for some $1 \leq i \leq n-1$ or $g(r_1^{n-1},1) \in {\bf r^{(m,n)}}(P)$.\\ 
If $g(r_i^{(2)},1^{(n-2)}) \in P$ for some $1 \leq i \leq n-1$, then $G(\frac{r_i}{s_i}^{(2)},\frac{1}{1}^{(n-2)})=
\frac{g(r_i^{(2)},1^{(n-2)})}{g(s_i^{(2)},1^{(n-2)})} \in S^{-1}P$. 
If $g(g(v,r_n,1^{(n-2)})^{(2)},1^{(n-2)}) \in P$ then $G(\frac{r_n}{s_n}^{(2)},\frac{1}{1}^{(n-2)})=
\frac{g(r_n^{(2)},1^{(n-2)})}{g(s_n^{(2)},1^{(n-2)})}=\frac{g(g(v^{(2)},1^{(n-2)}), r_n^{(2)},1^{(n-3)})}{g(g(v^{(2)},1^{(n-2)}),s_n^{(2)},1^{(n-3)})}=\frac{g(g(v,r_n,1^{(n-2)})^{(2)},1^{(n-2)})}{g(g(v,s_n,1^{(n-2)})^{(2)},1^{(n-2)})} \in S^{-1}P$. \\If $g(r_1^{i-1},1,r_{i+1}^{n-1}, g(v,r_n,1^{(n-2)})) \in {\bf r^{(m,n)}}(P)$ for some $1 \leq i \leq n-1$, then  $G(\frac{r_1}{s_1},...,\frac{r_{i-1}}{s_{i-1}},\frac{1}{1},\frac{r_{i+1}}{s_{i+1}},...,\frac{r_{n-1}}{s_{n-1}},\frac{r_n}{s_n})=  \frac{g(r_1^{i-1},v,a_{i+1}^{n-1},r_n)}{g(s_1^{i-1},v,s_{i+1}^{n-1},s_n)}=\frac{g(r_1^{i-1},1,r_{i+1}^{n-1},g(v,r_n,1^{(n-2)}))}{g(s_1^{i-1},1,s_{i+1}^{n-1},g(v,s_n,1^{(n-2)}))} \in S^{-1}({\bf r^{(m,n)}}(P))={\bf r^{(m,n)}}(S^{-1}P)$, by Lemma 4.7 in \cite{mah5}. If $g(r_1^{n-1},1) \in {\bf r^{(m,n)}}(P)$, then $G(\frac{r_1}{s_1},...,\frac{r_{n-1}}{s_{n-1}},\frac{1}{1})=\frac{g(r_1^{n-1},1)}{g(s_1^{n-1},1)} \in S^{-1}({\bf r^{(m,n)}}(P))={\bf r^{(m,n)}}(S^{-1}P)$. Consequently,  $S^{-1}P$ is an $n$-ary $wsq$-primary hyperideal of $S^{-1}R$.
\end{proof}
\begin{theorem} \label{ali3}
Let  $(R_1, f_1, g_1)$ and $(R_2, f_2, g_2)$ be two Krasner $(m,n)$-hyperrings with  scalar identitis $1_{R_1}$ and $1_{R_2}$. Assume that $P_1$ is a proper hyperideal of $R_1$. Then the
followings  are equivalent:
\begin{itemize}
\item[\rm(i)]~ $P_1 \times R_2$ is an $n$-ary $wsq$-primary hyperideal of $R_1 \times R_2$.
\item[\rm(ii)]~$P_1 \times R_2$ is an $n$-ary $sq$-primary hyperideal of $R_1 \times R_2$.
\item[\rm(iii)]~$P_1$ is an $n$-ary $sq$-primary hyperideal of $R_1$.
\end{itemize}
\end{theorem}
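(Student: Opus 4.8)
The plan is to run the equivalence through the two structural results already available: Theorem~\ref{ali2}, which describes exactly when a product $P_1 \times P_2$ is $n$-ary $sq$-primary, and Theorem~\ref{radic}, which forces $g(P^{(2)},1^{(n-2)}) = \langle 0 \rangle$ for any hyperideal that is $n$-ary $wsq$-primary but not $sq$-primary. The implication (ii) $\Rightarrow$ (i) is free of charge, since every $n$-ary $sq$-primary hyperideal is $n$-ary $wsq$-primary.

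For (ii) $\Leftrightarrow$ (iii) I would simply invoke Theorem~\ref{ali2} with the second factor taken to be $R_2$ itself: it then asserts that $P_1 \times R_2$ is $n$-ary $sq$-primary if and only if either $P_1$ is $n$-ary $sq$-primary, or $R_2$ is $n$-ary $sq$-primary and $P_1 = R_1$. The second alternative cannot occur, since $sq$-primary hyperideals are proper by definition, so $R_2$ is never $sq$-primary (and $P_1$ is assumed proper anyway). Hence (ii) reduces precisely to (iii).

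The substance lies in (i) $\Rightarrow$ (ii), which I would argue by contradiction. Suppose $P_1 \times R_2$ is $n$-ary $wsq$-primary but not $n$-ary $sq$-primary. By Theorem~\ref{radic},
\[
(g_1 \times g_2)\big((P_1 \times R_2)^{(2)},\, (1_{R_1},1_{R_2})^{(n-2)}\big) = \langle 0_{R_1 \times R_2} \rangle = \{(0_{R_1},0_{R_2})\}.
\]
However, $(0_{R_1},1_{R_2}) \in P_1 \times R_2$, and since $0$ is absorbing for $g_1$ while $1_{R_2}$ is a scalar identity for $g_2$,
\[
(g_1 \times g_2)\big((0_{R_1},1_{R_2})^{(2)},\, (1_{R_1},1_{R_2})^{(n-2)}\big) = \big(g_1(0_{R_1}^{(2)},1_{R_1}^{(n-2)}),\, g_2(1_{R_2}^{(n)})\big) = (0_{R_1},1_{R_2}).
\]
Since identities are nonzero, $(0_{R_1},1_{R_2}) \neq (0_{R_1},0_{R_2})$, so this element lies in $g((P_1 \times R_2)^{(2)},1^{(n-2)})$ but outside $\langle 0 \rangle$, contradicting the first display. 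Therefore $P_1 \times R_2$ is $n$-ary $sq$-primary.

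Combining (i) $\Rightarrow$ (ii) $\Leftrightarrow$ (iii) with (ii) $\Rightarrow$ (i) closes the loop. I do not expect any real obstacle; the only points to state carefully are that $\langle 0 \rangle = \{0\}$ in a Krasner $(m,n)$-hyperring and that the zero of $R_1 \times R_2$ is $(0_{R_1},0_{R_2})$, both immediate from the absorbing property of $0$ and the definition of the product hyperring. The essential use of the nonzero-identity hypothesis on $R_2$ is what makes the witness $(0_{R_1},1_{R_2})$ a genuine counterexample in the (i) $\Rightarrow$ (ii) step, and it is worth flagging that the heavy lifting has already been done in Theorems~\ref{ali2} and~\ref{radic}.
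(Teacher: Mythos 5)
Your proposal is correct and takes essentially the paper's route: the hard implication (i)$\Rightarrow$(ii) rests on the Theorem~\ref{radic}/Corollary~\ref{radi} obstruction (the paper observes $P_1\times R_2 \nsubseteq {\bf r^{(m,n)}}(0)$, while you exhibit the witness $(0_{R_1},1_{R_2})$, whose $g$-square is itself and nonzero, violating $g\big((P_1\times R_2)^{(2)},1^{(n-2)}\big)=\langle 0\rangle$ --- the same point), and the loop is closed via Theorem~\ref{ali2} together with the fact that every $sq$-primary hyperideal is $wsq$-primary. The only cosmetic difference is that you get (ii)$\Rightarrow$(iii) by citing Theorem~\ref{ali2} and discarding the alternative $P_1=R_1$ (legitimate, since $P_1$ is proper and $R_2$ is not a proper hyperideal of itself), whereas the paper proves that implication by a direct element computation; both are sound.
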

\begin{proof}
(i)$ \Longrightarrow $(ii) Let $P_1 \times R_2$ be an $n$-ary $wsq$-primary hyperideal of $R_1 \times R_2$. From $P_1 \times R_2 \nsubseteq {\bf r^{(m,n)}}(0)$, it follows that $P_1 \times R_2$ is an $n$-ary $sq$-primary hyperideal of $R_1 \times R_2$ by Corollary  \ref{radi}.

(ii)$ \Longrightarrow $(iii) Suppose that  $g_1(r_1^n) \in P_1$ for some $r_1^n \in R_1$. Then we have $g_1 \times g_2((r_1,1_{R_2}), \cdots, (r_n,1_{R_2})) =(g_1(r_1^n),1_{R_2}) \in P_1 \times R_2$. Since $P_1 \times R_2$ is an $n$-ary $sq$-primary hyperideal of $R_1 \times R_2$, we get $g_1 \times g_2((r_i,1_{R_2}),1_{R_2})^{(2)},(1_{R_1},1_{R_2})^{(n-2)})=(g_1(r_i^{2},1_{R_1}^{(n-2)}),1_{R_2}) \in P_1 \times R_2$ or 

$g_1 \times g_2((r_1,1_{R_2}),\cdots,(r_{i-1},1_{R_2}),(1_{R_1},1_{R_2}),(r_{i+1},1_{R_2}),\cdots,(r_n,1_{R_2}))$

$\hspace{1cm}=(g_1(r_1^{i-1},1_{R_1},r_{i+1}^n),1_{R_2})$

$\hspace{1cm} \in {\bf r^{(m,n)}}(P_1 \times R_2)$

$\hspace{1cm}={\bf r^{(m,n)}}(P_1) \times R_2$ \\ for some $1 \leq i \leq n$. This implies that $g_1(r_i^{(2)},1_{R_1}^{(n-2)}) \in P_1$ or $g_1(r_1^{i-1},1_{R_1},r_{i+1}^n) \in {\bf r^{(m,n)}}(P_1)$. Hence  $P_1$ is an $n$-ary $sq$-primary hyperideal of $R_1$.

(iii)$ \Longrightarrow $(i) It follows from Theorem \ref{ali2}.
\end{proof}
\begin{corollary}
Let  $(R_1, f_1, g_1)$ and $(R_2, f_2, g_2)$ be two Krasner $(m,n)$-hyperrings with  scalar identitis $1_{R_1}$ and $1_{R_2}$. Assume that $P_1$ is a proper hyperideal of $R_1$ and $P_2$ is a proper hyperideal of $R_2$ with $ P_1 \times P_2 \neq <0>$. Then the
following statements  are equivalent:
\begin{itemize}
\item[\rm(i)]~ $P_1 \times P_2$ is an $n$-ary $wsq$-primary hyperideal of $R_1 \times R_2$.
\item[\rm(ii)]~$P_1$ is an  $n$-ary $sq$-primary hyperideal   of $R_1$ and $P_2=R_2$ or $P_2$ is an  $n$-ary $sq$-primary hyperideal of  $R_2$ and $P_1=R_1$.
\item[\rm(iii)]~$P_1 \times P_2$ is an $n$-ary $sq$-primary hyperideal of $R_1 \times R_2$.
\end{itemize}
\end{corollary}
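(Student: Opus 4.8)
The plan is to prove the Corollary by reducing everything to Theorem \ref{ali3} together with the product result for $sq$-primary hyperideals, Theorem \ref{ali2}. The implications (ii)$\Longleftrightarrow$(iii) are immediate from Theorem \ref{ali2} (stated there as the equivalence of "$P_1\times P_2$ is $sq$-primary" with "$P_1$ is $sq$-primary and $P_2=R_2$, or $P_2$ is $sq$-primary and $P_1=R_1$"), and (iii)$\Longrightarrow$(i) is immediate since every $n$-ary $sq$-primary hyperideal is $n$-ary $wsq$-primary (Example following the definition of $wsq$-primary). So the only real content is (i)$\Longrightarrow$(ii) (equivalently (i)$\Longrightarrow$(iii)).

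For (i)$\Longrightarrow$(iii), I would argue as follows. Suppose $P_1\times P_2$ is $n$-ary $wsq$-primary but not $sq$-primary. By Corollary \ref{radi}, ${\bf r^{(m,n)}}(P_1\times P_2)={\bf r^{(m,n)}}(0_{R_1\times R_2})={\bf r^{(m,n)}}(0_{R_1})\times {\bf r^{(m,n)}}(0_{R_2})$. But ${\bf r^{(m,n)}}(P_1\times P_2)={\bf r^{(m,n)}}(P_1)\times {\bf r^{(m,n)}}(P_2)$, so ${\bf r^{(m,n)}}(P_1)={\bf r^{(m,n)}}(0_{R_1})$ and ${\bf r^{(m,n)}}(P_2)={\bf r^{(m,n)}}(0_{R_2})$. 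Now I use the hypothesis $P_1\times P_2\neq\langle 0\rangle$: at least one of $P_1,P_2$ contains a nonzero element, say $0\neq a\in P_1$. Then $(a,0)\in P_1\times P_2$ with $(a,0)\neq 0$, and the idea is to feed a suitable $g$-product equal to $(a,0)$ into the $wsq$-primary condition to force a contradiction with the radical computation; concretely, writing $(a,0)=g_1\times g_2((a,0),(1_{R_1},0),\dots,(1_{R_1},0))$ (using that $0$ is absorbing for $g_2$), the $wsq$-primary hypothesis gives either $g_1\times g_2((a,0)^{(2)},(1,0)^{(n-2)})=(g_1(a^{(2)},1^{(n-2)}),0)\in P_1\times P_2$ — which is automatic and gives nothing — or one of the radical alternatives, which would put an element like $(1_{R_1},\ast)$ into ${\bf r^{(m,n)}}(P_1\times P_2)={\bf r^{(m,n)}}(0)$, forcing $1_{R_1}\in {\bf r^{(m,n)}}(0_{R_1})$, i.e.\ $R_1$ is a hyperring in which $1$ is nilpotent, hence $R_1=\langle 0\rangle$, contradicting that $P_1$ is proper (or that $a\neq 0$).

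The step I expect to need the most care is exactly this last manipulation: choosing the right sequence of "padding" entries so that the $g$-product equals the chosen nonzero element of the product, and then checking that every branch of the $wsq$-primary dichotomy that is not vacuous collapses the relevant factor to $\langle 0\rangle$. One must be careful that the vacuous branch $g(r_i^{(2)},1^{(n-2)})\in P$ can always be satisfied trivially, so the contradiction has to be extracted purely from the radical branches; here the fact that $\mathbf{r}^{(m,n)}(P_1\times P_2)=\mathbf{r}^{(m,n)}(0)$ is a proper hyperideal (since $P_1\times P_2$ is proper) is what rules out $1_{R_1}$ or $1_{R_2}$ lying in it. Having derived the contradiction, $P_1\times P_2$ must be $sq$-primary, which is (iii); invoking Theorem \ref{ali2} then yields (ii), completing the cycle. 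Thus:

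\begin{proof}
(iii)$\Longrightarrow$(i) is clear, since every $n$-ary $sq$-primary hyperideal of a Krasner $(m,n)$-hyperring is $n$-ary $wsq$-primary. The equivalence (ii)$\Longleftrightarrow$(iii) is Theorem \ref{ali2} applied to $P_1,P_2$.

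It remains to prove (i)$\Longrightarrow$(iii). Assume $P_1 \times P_2$ is an $n$-ary $wsq$-primary hyperideal of $R_1\times R_2$ and, for contradiction, that it is not $n$-ary $sq$-primary. By Corollary \ref{radi},
\[
{\bf r^{(m,n)}}(P_1) \times {\bf r^{(m,n)}}(P_2) = {\bf r^{(m,n)}}(P_1 \times P_2) = {\bf r^{(m,n)}}(0) = {\bf r^{(m,n)}}(0_{R_1}) \times {\bf r^{(m,n)}}(0_{R_2}),
\]
so ${\bf r^{(m,n)}}(P_1)={\bf r^{(m,n)}}(0_{R_1})$ and ${\bf r^{(m,n)}}(P_2)={\bf r^{(m,n)}}(0_{R_2})$; in particular, since $P_1\times P_2$ is proper, $1_{R_1} \notin {\bf r^{(m,n)}}(P_1)$ and $1_{R_2}\notin {\bf r^{(m,n)}}(P_2)$. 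Since $P_1 \times P_2 \neq \langle 0\rangle$, without loss of generality there is $a \in P_1$ with $a \neq 0$. As $0$ is absorbing for $g_2$,
\[
g_1 \times g_2\bigl((a,0_{R_2}),(1_{R_1},0_{R_2})^{(n-1)}\bigr) = (a, 0_{R_2}) \in P_1 \times P_2,
\]
and $(a,0_{R_2}) \neq 0$. Since $P_1\times P_2$ is $n$-ary $wsq$-primary, for some $1 \leq i \leq n$ either
\[
g_1 \times g_2\bigl((x_i,0_{R_2})^{(2)},(1_{R_1},1_{R_2})^{(n-2)}\bigr) \in P_1 \times P_2
\]
(where $x_1=a$ and $x_j=1_{R_1}$ for $j\neq 1$), or the $g$-product of the remaining $n-1$ entries together with $(1_{R_1},1_{R_2})$ lies in ${\bf r^{(m,n)}}(P_1\times P_2)$. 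In the latter case the second coordinate of that product is $1_{R_2}$ (a product of copies of $1_{R_2}$), forcing $1_{R_2}\in {\bf r^{(m,n)}}(P_2)$, a contradiction. In the former case, if $i\neq 1$ we get $(1_{R_1},0_{R_2})\in P_1\times P_2$, whence $1_{R_1}\in P_1\subseteq {\bf r^{(m,n)}}(P_1)$, a contradiction; while if $i=1$ we obtain the radical alternative for $P_2$ from the entries other than the $i$-th, giving again $1_{R_2}\in {\bf r^{(m,n)}}(P_2)$, a contradiction. Hence $P_1\times P_2$ is $n$-ary $sq$-primary, i.e.\ (iii) holds.
\end{proof}
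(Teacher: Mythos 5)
Your reduction of the problem is fine: (ii)$\Longleftrightarrow$(iii) is Theorem \ref{ali2}, (iii)$\Longrightarrow$(i) is immediate, and the only content is (i)$\Longrightarrow$(iii) (the paper instead proves (i)$\Longrightarrow$(ii) directly and then closes the cycle). But your proof of (i)$\Longrightarrow$(iii) has a genuine gap: the sequence $(a,0_{R_2}),(1_{R_1},0_{R_2})^{(n-1)}$ cannot force a contradiction. With this padding, the $wsq$-primary disjunction is satisfied \emph{automatically}: the square branch at $i=1$ gives $(g_1(a^{(2)},1_{R_1}^{(n-2)}),0_{R_2})\in P_1\times P_2$, which holds trivially since $a\in P_1$; and the radical branch at any $i\geq 2$ gives the element with first coordinate $g_1(a,1_{R_1}^{(n-1)})=a$ and second coordinate $0_{R_2}$ (because the remaining padding entries have second coordinate $0_{R_2}$, which is absorbing), i.e.\ the element $(a,0_{R_2})\in P_1\times P_2\subseteq {\bf r^{(m,n)}}(P_1\times P_2)$. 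So your two claimed contradictions are not available: the assertion that the radical-branch product has second coordinate $1_{R_2}$ is false (it is $0_{R_2}$, not a product of copies of $1_{R_2}$), and the assertion that when the square branch holds at $i=1$ ``we obtain the radical alternative'' is a non sequitur, since the definition is a disjunction and nothing further follows once the automatic disjunct holds. A secondary point: your step ``since $P_1\times P_2$ is proper, $1_{R_1}\notin {\bf r^{(m,n)}}(P_1)$ and $1_{R_2}\notin {\bf r^{(m,n)}}(P_2)$'' needs both $P_1$ and $P_2$ proper; properness of the product only gives one of them, and if one really insists on both being proper then (ii) and (iii) are unsatisfiable, so the intended reading must allow $P_i=R_i$, in which case your contradiction scheme needs both exclusions and breaks down.

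The paper's argument avoids this by ``transposing'' the padding: choosing (WLOG) $0\neq p_2\in P_2$ and applying the $wsq$-primary condition to $(1_{R_1},p_2),(0,1_{R_2})^{(n-1)}$, whose product is $(0,p_2)\neq 0$. There the informative branches give $(1_{R_1},g_2(p_2^{(2)},1_{R_2}^{(n-2)}))\in P_1\times P_2$, forcing $1_{R_1}\in P_1$, or $(0,1_{R_2})\in {\bf r^{(m,n)}}(P_1\times P_2)$, forcing $1_{R_2}\in {\bf r^{(m,n)}}(P_2)$ and hence $1_{R_2}\in P_2$; either way $P_1=R_1$ or $P_2=R_2$, and then Theorem \ref{ali3} yields that the remaining component is $sq$-primary, giving (ii) and hence (iii). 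If you want to salvage your route, you must use a sequence of this shape (identity in the coordinate being tested, zeros in the first coordinates of the padding), and then argue via $P_1=R_1$ or $P_2=R_2$ rather than via Corollary \ref{radi} alone.
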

\begin{proof}
(i) $\Longrightarrow$ (ii) Let $P_1 \times P_2 \neq <0>$ be an $n$-ary $wsq$-primary hyperideal of $R_1 \times R_2$. We assume that $P_2 \neq <0>$. Take any $0 \neq p_2 \in P_2$. Then $g_1 \times g_2((1_{R_1},p_2),(0,1_{R_2})^{(n-1)})=(0,p_2) \in P_1 \times P_2$. Since $P_1 \times P_2$ is an $n$-ary $wsq$-primary hyperideal of $R_1 \times R_2$ and $(0,0) \neq g_1 \times g_2((1_{R_1},p_2),(0,1_{R_2})^{(n-1)}) \in P_1 \times P_2$, we have  $g_1 \times g_2((1_{R_1},p_2)^{(2)},(1_{R_1},1_{R_2})^{(n-2)})=(1_{R_1},g_2(p_2^{(2)},1_{R_2}^{(n-2)})) \in P_1 \times P_2$ or $g_1 \times g_2((1_{R_1},1_{R_2}),(0,1_{R_2})^{(n-1)})=(0,1_{R_2}) \in {\bf r^{(m,n)}}(P_1 \times P_2)={\bf r^{(m,n)}}(P_1 ) \times {\bf r^{(m,n)}}(P_2)$. Then we conclude that $1_{R_1} \in P_1$ or $1_{R_2} \in P_2$ which means $P_1 =R_1$ or $P_2 = R_2$. If $P_1=R_1$, then $P_2$  is an  $n$-ary $sq$-primary hyperideal of  $R_2$ by Theorem \ref{ali3}. Similiar for the other case.

(ii) $\Longrightarrow$ (iii) It follows from Theorem \ref{ali2}.

(iii) $\Longrightarrow$ (i) Since every $n$-ary $sq$-primary hyperideal is $n$-ary $wsq$-primary, we are done.
\end{proof}
\section{Conclusion}
This paper included the structures of $n$-ary $q$-primary, $(k,n)$-absorbing $q$-primary, $sq$-primary and $wsq$-primay hyperideals of a Krasner $(m,n)$-hyperring $R$. Several important results in these classes of hyperideals were discussed and proved. The relationship of them is examined. Moreover,  the stabilty of the notions were studied in some hyperring-theoretic constructions.
 Based on our work, we propose some open problems to researchers:
 \begin{itemize}
\item[\rm(1)]~To introduce and study $(k,n)$-absorbing $sq$-primary hyperideals. 
\item[\rm(2)]~  To introduce and study $(k,n)$-absorbing $wsq$-primary hyperideals. 
\item[\rm(3)]~ To introduce and study  $wsq$-$\delta$-primary hyperideals where $\delta$ is a hyperideal expansion of a Krasner $(m,n)$-hyperring $R$. 
\end{itemize}

\end{document}